\newcommand{\defin}[1]{\textbf{#1}}
\DeclareMathOperator{\id}{id}
\DeclareMathOperator{\Span}{span}
\newcommand{\restricted}[2]{{\left.{#1}\right|_{#2}}}
\newcommand{\aOT}{{\alpha_{\operatorname{OT}}}}
\newcommand{\DOT}{{\DD_{\operatorname{OT}}}}
\newcommand{\lcan}{{\lambda_{\operatorname{can}}}}
\newcommand{\p}{\partial}
\newcommand{\lie}[1]{{\mathcal{L}_{#1}}}
\newcommand{\abs}[1]{{\mathopen\lvert #1\mathclose\rvert}}
\newcommand{\norm}[1]{{\mathopen\lVert #1\mathclose\rVert}}
\newcommand{\Ureg}{{U_{\mathrm{reg}}}}
\newcommand{\Usymp}{{U_{\mathrm{symp}}}}
\renewcommand{\epsilon}{\varepsilon}
\newcommand{\0}{{\mathbf 0}}
\newcommand{\dD}{{\mathcal D}}
\newcommand{\DD}{{\mathbb D}}
\newcommand{\fF}{{\mathcal F}}
\newcommand{\NN}{{\mathbb N}}
\newcommand{\bfp}{{\mathbf p}}
\newcommand{\bfq}{{\mathbf q}}
\newcommand{\QQ}{{\mathbb Q}}
\newcommand{\RR}{{\mathbb R}}
\renewcommand{\SS}{{\mathbb S}}
\newcommand{\TT}{{\mathbb T}}
\newcommand{\x}{{\mathbf x}}
\newcommand{\y}{{\mathbf y}}
\newcommand{\vol}{\mathrm{vol}}
\theoremstyle{plain}
\newcounter{maintheorem}
\newtheorem{main_theorem}[maintheorem]{Theorem}
\newtheorem{propIntro}{Proposition}
\newtheorem{questionIntro}[propIntro]{Question}
\newtheorem{remark*}{Remark}
\newtheorem{theorem}{Theorem}[section]
\newtheorem{lemma}[theorem]{Lemma}
\newtheorem{corollary}[theorem]{Corollary}
\newtheorem{proposition}[theorem]{Proposition}
\theoremstyle{remark}
\newtheorem{remark}[theorem]{Remark}
\theoremstyle{definition}
\newtheorem{definition}{Definition}
\numberwithin{equation}{section}
\begin{document}
\title{An overtwisted convex hypersurface in higher dimensions}

\author[R.\ Chiang]{River Chiang}

\address[R.\ Chiang]{Department of Mathematics\\ National Cheng Kung
  University No.~1\\ Dasyue Rd.\\ Tainan City 70101\\ Taiwan}

\email{riverch@mail.ncku.edu.tw}

\author[K.\ Niederkrüger]{Klaus Niederkrüger-Eid}

\address[K.\ Niederkrüger]{
  Institut Camille Jordan\\
  Université Claude Bernard Lyon~1\\
  43 boulevard du 11 novembre 1918\\
  F-69622 Villeurbanne Cedex \\
  FRANCE}

\email{niederkruger@math.univ-lyon1.fr}

\begin{abstract}
  We show that the germ of the contact structure surrounding a certain
  kind of convex hypersurfaces is overtwisted.  We then find such
  hypersurfaces close to any plastikstufe with toric core so that
  these imply overtwistedness.  All proofs in this article are
  explicit, and we hope that the methods used here might hint at a
  deeper understanding of the size of neighborhoods in contact
  manifolds.

  In the appendix we reprove in a concise way that the Legendrian
  unknot is loose if the ambient manifold contains a large enough
  neighborhood of a $2$-dimensional overtwisted disk.  Additionally we
  prove the folklore result that the singular distribution induced on
  a hypersurface~$\Sigma$ of a contact manifold~$(M,\xi)$ determines
  the germ of the contact structure around $\Sigma$.
\end{abstract}

\maketitle

The fundamental distinction between tight and overtwisted contact
structures was discovered first by Eliashberg in dimension~$3$
\cite{Eliashberg_Overtwisted}, and then generalized to arbitrary
dimensions by Borman--Eliashberg--Murphy
\cite{BormanEliashbergMurphyExistence}.  The main feature is that an
overtwisted contact structure is flexible.  However, the original high
dimensional definition makes it practically unverifiable if a given
contact structure is overtwisted. Thanks to Casals--Murphy--Presas
\cite{CasalsMurphyPresas} we know that most previously existing
conjectural definitions for overtwistedness are actually equivalent to
the one given in \cite{BormanEliashbergMurphyExistence}.

Still many open questions persist.  One of them was lifted by Huang
\cite{HuangPlastistufe}: The second author of this article proposed a
definition of overtwistedness called plastikstufe
\cite{NiederkrugerPlastikstufe}.  The results in
\cite{MurphyEtAlLoosePlastikstufe} combined with those of
\cite{CasalsMurphyPresas} showed that certain very special
plastikstufes imply overtwistedness, while Huang explains in his
article that this result extends to \emph{any} plastikstufe.
Nevertheless, several points in his proof are unclear to us and more
detailed arguments may be desired.

In this article, we reprove Huang's result for the more restrictive
case of plastikstufes with toric core (generalizing results by Adachi
\cite{AdachiLoosePlastikstufe}).

\medskip

Our strategy is based on the following observation about a certain
kind of convex hypersurfaces.  Consider for any $C>0$ the manifold
\begin{equation*}
  \Sigma_C = \DD^2_{\le \pi} \times (-C,C)^{2n}
\end{equation*}
carrying a singular distribution~$\dD_C$ given as the kernel of the
$1$-form $\beta = r\, \sin r\, d\vartheta - \sum_{j=1}^n t_j\, ds_j$,
where $(r,\vartheta)$ are polar coordinates on the disk, and
$(s_j,t_j)$ are the natural coordinates on the cube
$(- C, C)^n\times (-C,C)^n$.

\begin{main_theorem}\label{thm: overtwisted convex disk}
  There exists a constant~$C_{\mathrm{OT}}>0$ such that every contact
  manifold~$(M,\xi)$ of dimension $\ge 5$ is overtwisted, if it is
  possible to embed a hypersurface $(\Sigma_C, \dD_C)$ with
  $C > C_{\mathrm{OT}}$ such that $\xi$ induces the singular
  distribution~$\dD_C$ on $\Sigma_C$.
\end{main_theorem}

Consequently, we call any embedded hypersurface $(\Sigma_C, \dD_C)$
with $C > C_{\mathrm{OT}}$ an \defin{overtwisted convex disk}.

\begin{remark*}
  \begin{itemize}
  \item [(a)] This definition is closely related to the
    characterization of overtwisted contact structures in terms of
    ``large neighborhoods'' given in
    \cite{NiederkrugerPresas_neighborhoods, CasalsMurphyPresas}.  Our
    result states that instead of considering large embedded balls, it
    is already sufficient to find a large hypersurface.
  \item [(b)] In the model used in Section~\ref{sec: squeeze in high
      dimensions}, one sees directly that there is an obvious contact
    vector field $Z$ ($=\partial_z$ in fact) that is transverse to the
    overtwisted convex disk.  It is tempting to try to characterize
    the constant~$C_{\mathrm{OT}}$ more explicitly by trying to
    recognize some sort of higher dimensional Giroux criterion for
    convex hypersurfaces.  Unfortunately, the dividing set for $Z$ is
    non-compact, and we have not succeeded in finding a more suitable
    contact vector field.
  \item [(c)] Even though we are unable to give a specific value for
    the size parameter that appears in the definition of
    overtwistedness in \cite{BormanEliashbergMurphyExistence} (and the
    equivalent formulation via large neighborhoods in
    \cite{CasalsMurphyPresas}), it follows from our argument that the
    size parameter can be chosen uniformly for all dimensions.
  \end{itemize}
\end{remark*}

\begin{proof}[Proof of Theorem~\ref{thm: overtwisted convex disk}]
  We show in Section~\ref{sec: overtwisted dim 3} and \ref{sec:
    squeeze in high dimensions} that every neighborhood of
  $\DD^2_{\le \pi} \times \bigl(-C,C\bigr)^{2n}$ contains the
  embedding of a certain type of open subset
  $B(h)\times \bigl(-\frac{5}{6}\,C, \frac{5}{6}\,C\bigr)^{2n}$.  See
  Corollary~\ref{cor: can embedded high domain close to hypersurface}
  for the details.  As explained first in
  \cite[p.~1813]{MurphyEtAlLoosePlastikstufe}, the Legendrian unknot
  is loose, if $C$ is chosen sufficiently large.  We give in
  Appendix~\ref{sec: overtwisted then loose} a streamlined proof of
  this statement.

  It was proved in \cite{CasalsMurphyPresas} that any contact manifold
  in which the unknot is loose is overtwisted.
\end{proof}

\begin{questionIntro}
  Is it possible to explicitly show that in any neighborhood of a
  hypersurface~$\Sigma_C$ with $ C > C_{\mathrm{OT}}$ one can embed a
  hypersurface~$\Sigma_{C'}$ with $C' > 2C$ as in the analogous claim
  for loose charts \cite[Proposition~4.4]{MurphyLoose}?
\end{questionIntro}

It is likely that the contact germs around the overtwisted convex
disk~$(\Sigma_C, \dD_C)$ correspond to the thick neighborhoods which
were shown in \cite{CasalsMurphyPresas} to be overtwisted.
Unfortunately the formulation of thick neighborhoods in
\cite{CasalsMurphyPresas} and the one used here are not directly
comparable, and the verification of the equivalence would force us to
dig through the proofs of the corresponding statements in
\cite{CasalsMurphyPresas}.  We have preferred to take instead the
detour over the loose unknots which allows us to split the argument
cleanly into separate steps.

\medskip

Even if it might be unclear at the moment if Theorem~\ref{thm:
  overtwisted convex disk} is more than a curious observation, it
allows us to prove in an extremely elementary way that every contact
manifold containing a plastikstufe with \emph{toric} core is
overtwisted, as claimed in \cite{HuangPlastistufe}.

\medskip

In Appendix~\ref{sec: overtwisted then loose}, we show that the
Legendrian unknot is loose in a large neighborhood of an overtwisted
$2$-disk; in Appendix~\ref{sec: contact germ by hypersurface} we prove
the folklore result that a hypersurface in a contact manifold
determines together with the induced singular distribution the germ of
the contact structure.

\subsection*{Acknowledgments}

We thank Sylvain Courte and Emmanuel Giroux and Patrick Massot for
useful and interesting discussions.  During a short conversation with
Emmy Murphy, we learned that she had come to similar conclusions
regarding the ``height'' of the overtwisted model.

\section{A plastikstufe with toric core implies overtwistedness}

Denote the cylindrical coordinates on $\RR^3$ by $(r, \vartheta, z)$.
The $1$-form
\begin{equation*}
  \aOT = \cos(r)\,dz + r\sin(r)\, d\vartheta
\end{equation*}
is then a well-defined contact form.  The disk
$\DOT := \bigl\{(r,\vartheta,z) \mid\, r\le \pi, \; z = 0 \bigr\}$ is
overtwisted, and we will call it the \emph{standard} overtwisted disk.

We choose a small cylindrical box of height~$h$ around $\DOT$ of the
form
\begin{equation}
  B(h) := \DD^2_{< \pi + \delta} \times (-h,h) \;.  \label{eq: box}
\end{equation}

\medskip

Let $(M,\xi)$ be a $(2n+3)$-dimensional contact manifold that contains
a plastikstufe of the form $\DOT \times \TT^n$.  We will show that we
can find an arbitrarily ``large''
hypersurface~$\DD^2_{\le \pi} \times [-C,C]^{2n}$ in any neighborhood
of the plastikstufe by successively unwinding each of the
$\SS^1$-factors of the torus.  This way we obtain the following
corollary.

\begin{corollary}\label{cor: toric plastikstufe overtwisted}
  Every contact manifold that contains a plastikstufe
  $\DOT \times \TT^n$ with toric core, also admits an embedding of an
  overtwisted convex disk.
\end{corollary}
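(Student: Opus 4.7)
The plan is to realize an overtwisted convex disk $(\Sigma_C, \dD_C)$ with $C > C_{\mathrm{OT}}$ as a hypersurface inside any given neighborhood of the plastikstufe in $(M, \xi)$; combined with Theorem~\ref{thm: overtwisted convex disk} this yields overtwistedness and in particular the claimed embedding.

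First, I would invoke the germ theorem proved in Appendix~\ref{sec: contact germ by hypersurface} to reduce to a standard model: any neighborhood of the plastikstufe $\DOT \times \TT^n$ is contactomorphic to a neighborhood of the zero section in $\bigl(B(h) \times \TT^n \times (-\epsilon, \epsilon)^n,\ \ker(\aOT + \sum_{j=1}^n t_j\, ds_j)\bigr)$ for suitable $h, \epsilon > 0$. In this model, the hypersurface $\{z = 0\}$ already carries the induced distribution $\ker\bigl(r\sin r\, d\vartheta + \sum t_j\, ds_j\bigr)$, which after the sign change $t_j \mapsto -t_j$ is $\dD_{C_0}$ on $\DD^2_{\le\pi} \times (-C_0, C_0)^{2n}$ for $C_0 = \min(\pi,\epsilon)$ (with $s_j$ restricted to an arc of $\SS^1$ of length $2\pi$). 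This is the base case of the induction.

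To enlarge $C_0$ to an arbitrarily large $C$, I would argue by induction on $k = 0, 1, \ldots, n$ that, at each step, one can replace one additional cylinder $\SS^1 \times (-\epsilon', \epsilon')$ by a cube $(-C, C)^2$ of arbitrary size. The inductive step exploits that $\alpha$ is translation-invariant in each pair $(s_j, t_j)$: one passes to the universal cover $\RR \to \SS^1$ of the chosen factor, applies a rescaling contactomorphism $(s_j, t_j) \mapsto (s_j/\mu, \mu t_j)$, and re-embeds the enlarged cube as a \emph{spiral} that winds many times around the $\SS^1$. Successive windings are kept disjoint by the ambient $z$-coordinate of $B(h)$ at the first step, and by one of the already-unwound coordinates at later steps; after $n$ such unwindings one obtains the desired $\Sigma_C$ for arbitrary~$C$.

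The main obstacle is a symplectic area constraint: the cube $(-C, C)^2$ of area $4C^2$ cannot embed symplectically into the cylinder $\SS^1 \times (-\epsilon', \epsilon')$ of area $4\pi\epsilon'$ once $C^2 > \pi\epsilon'$, so a pure rescaling--and--cover argument is insufficient on its own. The resolution is to drop the requirement $\iota^*\alpha = \beta$ in favour of the weaker $\iota^*\alpha = \lambda \cdot \beta$ with a nonconstant conformal factor $\lambda$: then $\iota^*\bigl(\sum t_j\, ds_j\bigr)$ need only equal $-\sum t'_j\, ds'_j$ up to an exact $1$-form, and this exact form can be absorbed into the $\cos r\, dz$ summand of $\alpha$ by giving the embedding a nontrivial $z$-component. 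Producing an explicit spiral that is injective, fits inside the prescribed neighborhood, and pulls $\alpha$ back pointwise to a nonzero multiple of $\beta$ on the whole of $\Sigma_C$ is the heart of the construction.
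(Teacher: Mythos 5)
The core ideas here match the paper's Lemmas~\ref{lemma: unwrap one circle} and~\ref{lemma: unwrap torus}: unwind each $\SS^1$-factor by spiraling along its universal cover, and cure the resulting pullback discrepancy in $\lcan$ by giving the embedding a nontrivial $z$-component so that the extra terms cancel against $\cos r\,dz$.  You also correctly identify the final rescaling $(s_j,t_j)\mapsto(\mu^{-1}s_j,\mu t_j)$ needed to turn a long thin rectangle into a roughly square one exceeding $C_{\mathrm{OT}}$.

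Where you diverge from the paper: you propose an induction, unwinding one $\SS^1$ at a time and invoking a germ-of-hypersurface argument at each step; the paper does all $n$ unwindings simultaneously with a single explicit embedding, taking $z=\sum_j\hbar_j s_j$, $q_j=e^{is_j}$, $p_j=t_j+\hbar_j\cos r$ with the constants $\hbar_1,\dotsc,\hbar_n$ chosen $\QQ$-linearly independent so that injectivity follows in one line.  The simultaneous version is cleaner: it avoids repeated invocations of the germ theorem (each of which loses quantitative control on the $t$-thickness) and reduces the pullback verification to a single short computation.  Your iterated version is plausible, but the bookkeeping would need care, and you leave the explicit spiral formula unwritten --- which you yourself flag as ``the heart of the construction.''  That formula is precisely what the paper supplies.

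Two smaller points.  First, your initial reduction to the product model is not an application of the hypersurface-germ result of Appendix~\ref{sec: contact germ by hypersurface}: the plastikstufe has codimension $n+1$, not $1$, so one needs the tubular-neighborhood theorem for plastikstufes (the paper cites it from the literature).  The hypersurface-germ theorem enters later, in Corollary~\ref{cor: can embedded high domain close to hypersurface} and potentially in the later steps of your induction.  Second, the detour through a conformal factor $\lambda$ is unnecessary: with the choice $p_j=t_j+\hbar_j\cos r$ and $z=\sum_j\hbar_j s_j$, the pullback of $\aOT+\lcan$ is \emph{exactly} $r\sin r\,d\vartheta-\sum_j t_j\,ds_j$, so one obtains the singular distribution on the nose without ever rescaling the contact form.
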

\begin{proof}
  There is a neighborhood of the plastikstufe that is contactomorphic
  to an open neighborhood of $\DOT\times \TT^n$ in
  \begin{equation*}
    \bigl( \RR^3\times T^*\TT^n, \aOT + \lcan\bigr) \;;
  \end{equation*}
  compare \cite[Theorem~I.1.3.]{NiederkrugerHabilitation}.  Choosing
  $\epsilon > 0$ and $\delta > 0$ small enough, we can assume that
  this neighborhood is contactomorphic to a product of the form
  $B(\epsilon) \times \DD_{<\delta}\bigl(T^*\TT^n\bigr)$, where
  $B(\epsilon) \subset \RR^3$ is a cylindrical box as defined in
  \eqref{eq: box} and $\DD_{<\delta}\bigl(T^*\TT^n\bigr)$ is the disk
  bundle of radius $\delta$ in $T^*\TT^n$.

  We can now apply to this neighborhood Lemma~\ref{lemma: unwrap one
    circle} in dimension~$5$, or Lemma~\ref{lemma: unwrap torus} in
  the general case to find a hypersurface of the form
  $\DD^2_{\le \pi} \times (-C,C)^n\times \bigl(- a,a\bigr)^n$ for
  $a = \tfrac{\delta}{2\sqrt{n}}$, and where $C>0$ can be chosen to be
  arbitrarily large.  The singular distribution induced by the contact
  structure agrees with the kernel of
  $r\, \sin r\, d\vartheta - \sum_{j=1}^n t_j\, ds_j$, where
  $(r,\vartheta)$ are polar coordinates on the disk, and $(s_j,t_j)$
  are the natural coordinates on the rectangle
  $(- C, C)\times (-a,a)$.

  If $C>0$ is chosen larger than $2 C_{\mathrm{OT}}^2 /a$, then it
  suffices to apply to each coordinate pair~$(s_j,t_j)$ the
  diffeomorphism
  $(s_j,t_j)\mapsto \bigl(\mu^{-1}\, s_j, \mu\, t_j\bigr)$ with
  $\mu = \frac{2C_{\mathrm{OT}}}{a}$ to contain the desired
  overtwisted convex disk $(\Sigma_{\tilde{C}}, \dD_{\tilde{C}})$ for
  some appropriate $\tilde{C}> C_{\mathrm{OT}}$.
\end{proof}

We will now show how to ``unwrap'' the toric plastikstufe.  Consider
for simplicity first a contact manifold of dimension~$5$ so that
$\TT^n = \SS^1$.

\begin{lemma}\label{lemma: unwrap one circle}
  Choose any $\epsilon > 0$ and $\delta > 0$, and let
  $B(\epsilon) \times \DD_{<\delta}\bigl(T^*\SS^1\bigr)$ be a
  neighborhood of a plastikstufe $\DOT\times \SS^1$ in
  $\bigl( \RR^3\times T^*\SS^1, \aOT + \lcan\bigr)$.

  For any arbitrarily large $C>0$ it is possible to embed the
  hypersurface
  \begin{equation*}
    S_C := \DD^2_{\le \pi} \times \bigl(- C,  C\bigr) \times
    \bigl(- \tfrac{\delta}{2}, \tfrac{\delta}{2}\bigr)
  \end{equation*}
  into $B(\epsilon) \times \DD_{<\delta}\bigl(T^*\SS^1\bigr)$ such
  that the contact structure induces the singular
  distribution~$\dD := \ker \bigl(r\sin r\, d\vartheta -t\,ds\bigr)$
  on $S_C$.  Here $(r,\vartheta)$ are polar coordinates on the disk,
  and $(s,t)$ are the natural coordinates on the rectangle
  $\bigl(- C, C\bigr) \times \bigl(- \tfrac{\delta}{2},
  \tfrac{\delta}{2}\bigr)$.
\end{lemma}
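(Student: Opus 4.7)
The basic idea is to embed $S_C$ into the model neighborhood by keeping the disk coordinates unchanged, wrapping the long interval $(-C,C)$ around the $\SS^1$-factor of the cotangent bundle, and using the two remaining target coordinates to enforce injectivity without destroying the prescribed singular distribution.

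Concretely, with $(R,\Theta,Z)$ denoting the $\RR^3$-coordinates and $(\sigma,\tau)$ the coordinates on $T^*\SS^1$ in the convention $\lcan = \tau\, d\sigma$, I would try the ansatz
\begin{equation*}
  \Phi(r,\vartheta,s,t) := \bigl(r,\, \vartheta,\, z(s),\, [s],\, -t - \cos(r)\, z'(s)\bigr),
\end{equation*}
where $[s] \in \SS^1$ denotes the reduction of $s$ modulo the circle's period, and $z\colon (-C,C) \to (-\epsilon,\epsilon)$ is any smooth, strictly monotone function with $|z'(s)| < \delta/2$ everywhere (a rescaled $\arctan$ works for every $C$).

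Injectivity of $\Phi$ would then follow from the strict monotonicity of $z$: two source points with the same image must share the same $Z$-component, which forces their $s$-values to agree, after which the remaining components determine $(r,\vartheta,t)$. Being an immersion is immediate by inspection of the Jacobian. The inequality $|z'(s)| < \delta/2$ combined with $|t| < \delta/2$ yields $|\tau| < \delta$, so that the image sits inside $\DD_{<\delta}(T^*\SS^1)$; this is precisely where the factor of two in the source $t$-range gets used. The verification that $\Phi$ induces the singular distribution $\dD$ is then a one-line computation: the pullback of $\aOT + \lcan$ splits as $\cos(r)\, z'(s)\, ds$ coming from $\cos R\, dZ$, the form $r\sin r\, d\vartheta$ coming from $R\sin R\, d\Theta$, and $\bigl(-t - \cos(r)\, z'(s)\bigr)\, ds$ coming from $\tau\, d\sigma$, so that the two $\cos(r)\, z'(s)\, ds$-terms cancel to leave precisely $r\sin r\, d\vartheta - t\, ds$.

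The one real subtlety, and the step I would expect to be the main obstacle, is identifying the correct compensating term $-\cos(r)\, z'(s)$ in the $\tau$-slot. The naïve ansatz $\tau = -t$ together with a monotone $z$ leaves an uncancelled $\cos(r)\, z'(s)\, ds$ in the pullback, and no rescaling can absorb it into $r\sin r\, d\vartheta - t\, ds$, because its $r$-dependence differs from that of the $d\vartheta$-coefficient. The resolution exploits the fact that $\tau$ enters $\lcan$ only as a coefficient of $d\sigma$, so letting $\tau$ depend on $r$ introduces an $r$-dependent $ds$-term in the pullback without generating any spurious $dr$-term; this is the key point that lets the construction close up.
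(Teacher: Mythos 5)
Your proof is correct and takes essentially the same route as the paper: wrap the long $s$-interval around the $\SS^1$-factor, shrink the $z$-coordinate, and add a compensating $\cos r$-term to the cotangent-fiber coordinate so that the unwanted $\cos(r)\,dz$-contribution cancels in the pullback. The paper just specializes your $z$ to the linear function $z(s)=\hbar s$ with $\hbar=\epsilon/C$ (assumed $<\delta/2$), in which case your $\tau=-t-\cos(r)\,z'(s)$ becomes, after matching Liouville-form sign conventions, exactly the paper's $p=t+\hbar\cos r$.
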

\begin{proof}
  Define for any $\hbar > 0$ an embedding
  \begin{equation*}
    \begin{split}
      \Psi_\hbar \colon&  \DD^2_{\le \pi} \times \RR^2 \to  \RR^3 \times T^*\SS^1 \\
      &\bigl(r,\vartheta;\; s, t\bigr) \mapsto \bigl(r,\vartheta,
      \hbar \, s; \; q = e^{is}, \; p=t + \hbar \, \cos r\bigr)
    \end{split}
  \end{equation*}
  ``reeling up'' the hypersurface along the $\SS^1$-core of the
  plastikstufe.  One easily verifies that
  \begin{equation*}
    \Psi_\hbar^* \bigl(\aOT + \lcan\bigr) = r\sin r\, d\vartheta
    -t\,ds
  \end{equation*}
  so that the singular distribution induced by
  $\ker\bigl(\aOT + \lcan\bigr)$ is indeed equal to $\dD$.

  Choose $\hbar = \epsilon/C$ and suppose that
  $\hbar < \frac{\delta}{2}$, we see that $\Psi_\hbar(S_C)$ lies in
  the given neighborhood
  $B(\epsilon) \times \DD_{<\delta}\bigl(T^*\SS^1\bigr)$.
\end{proof}

For general dimensions, the embedding is only slightly more
complicated.  Denote the standard coordinates on $\TT^n$ by
$\bfq = (q_1,\dotsc,q_n)$ and those on $T^*\TT^n$ by
$(\bfq,\bfp) = (q_1,\dotsc,q_n;p_1,\dotsc,p_n)$.

\begin{lemma}\label{lemma: unwrap torus}
  Any neighborhood
  $B(\epsilon) \times \DD_{<\delta}\bigl(T^*\TT^n\bigr)$ of a
  plastikstufe $\DOT\times \TT^n$ with $\epsilon > 0$ and $\delta > 0$
  arbitrarily small contains an embedded hypersurface of the form
  \begin{equation*}
    S_C := \DD^2_{\le \pi} \times
    \Bigl\{(s_1,\dotsc,s_n;t_1,\dotsc,t_n)\in \RR^{2n}  \Bigm| \;
    \abs{s_j} <  C, \; \abs{t_j} < \tfrac{\delta}{2\sqrt{n}}\Bigr\}
  \end{equation*}
  with $C>0$ arbitrarily large such that the contact structure induces
  the singular distribution
  $\dD = \ker \bigl(r\sin r\, d\vartheta - \sum_{j=1}^n
  t_j\,ds_j\bigr)$ on $S_C$.
\end{lemma}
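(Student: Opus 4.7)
I will generalize the proof of Lemma~\ref{lemma: unwrap one circle} by reeling up the hypersurface simultaneously along all $n$ circles of $\TT^n$, distributing the single $z$-coordinate of $\RR^3$ as a linear combination of the $n$ winding coordinates.

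More concretely, I fix positive constants $\hbar_1, \dotsc, \hbar_n$ that are linearly independent over~$\QQ$ (for example $\hbar_j = \hbar\sqrt{p_j}$ for distinct primes $p_j$, scaled by a small parameter $\hbar$) and define
\begin{equation*}
  \Psi(r, \vartheta;\, \mathbf s, \mathbf t)
   = \Bigl(r,\; \vartheta,\; \textstyle\sum_{j=1}^n \hbar_j s_j;\;
          q_j = e^{i s_j},\; p_j = t_j + \hbar_j \cos r\Bigr).
\end{equation*}
By the same direct computation as in the one-circle case, $\Psi^*(\aOT + \lcan) = r \sin r\, d\vartheta - \sum_j t_j\, ds_j$, since for each index~$j$ separately the $\hbar_j \cos r$ produced by the $\cos r\, dz$ term of $\aOT$ cancels the corresponding correction in $p_j\, dq_j$. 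The restriction of $\Psi$ to $S_C$ is injective: two points with $\mathbf s' = \mathbf s + 2\pi \mathbf m$, $\mathbf m \in \ZZ^n$, can only share the same image if $\sum_j \hbar_j m_j = 0$, which by rational independence forces $\mathbf m = 0$.

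To place $\Psi(S_C)$ inside $B(\epsilon) \times \DD_{<\delta}(T^*\TT^n)$, I use the elementary bounds $|z| \le C \sum_j \hbar_j$ and, combining $\|\mathbf t\| \le \sqrt n \cdot \delta/(2\sqrt n) = \delta/2$ with the triangle inequality, $\|\mathbf p\| \le \delta/2 + \sqrt{\sum_j \hbar_j^2}$. Rescaling the rationally independent tuple $(\hbar_j)$ by a sufficiently small positive factor drives both bounds strictly below $\epsilon$ and $\delta$ respectively, and so $\Psi(S_C)$ lands in the prescribed neighborhood.

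The only genuinely new point compared to the five-dimensional argument is injectivity: the single real coordinate $z$ must separate all $2\pi\ZZ^n$-translates of a point of $\TT^n$ lying inside $(-C,C)^n$, and rational independence of the scales $\hbar_j$ is exactly what makes the linear form $\sum_j \hbar_j s_j$ distinguish such translates. The bound $|t_j| < \delta/(2\sqrt n)$ in the statement of $S_C$ is precisely the Euclidean normalization that keeps the cotangent vector $\mathbf p$ inside the disk bundle of radius~$\delta$.
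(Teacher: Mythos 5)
Your argument is correct and matches the paper's proof essentially step for step: the same map $\Psi$ with rationally independent constants $\hbar_j$, the same pullback computation, the same injectivity argument via $2\pi\ZZ^n$-translates and rational independence, and the same two bounds (on $|z|$ and on the fiber radius) ensuring the image lands in the prescribed neighborhood. The only point not spelled out is that $\Psi$ is an immersion, but this is immediate from the block structure of its differential, so no genuine gap remains.
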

\begin{proof}
  Choose constants $\hbar_1,\dotsc,\hbar_n > 0$ that are linearly
  independent over $\QQ$, and define a map
  $\Psi \colon \DD^2_{\le \pi} \times \RR^{2n} \to \RR^3 \times
  T^*\TT^n$ by 
  \begin{multline*}
    \bigl(r,\vartheta;\; s_1,\dotsc,s_n;t_1,\dotsc,t_n\bigr) \mapsto
    \Bigl(r,\vartheta, \; z = \sum_{j=1}^n \hbar_j \, s_j; \quad q_1 =
    e^{is_1}, \dotsc, q_n = e^{is_n}; \\ p_1 = t_1 + \hbar_1 \, \cos
    r, \dotsc , p_n = t_n + \hbar_n \, \cos r\Bigr)\;.
  \end{multline*}
  It is easy to verify that
  $\Psi^*\bigl(\aOT + \lcan\bigr) = r\sin r\, d\vartheta - \bigl(t_1
  \, ds_1 + \dotsm + t_n\, ds_n\bigr)$ induces the distribution~$\dD$
  on $S_C$.  It is also immediately clear that $\Psi$ is an immersion.

  \medskip

  To see that $\Psi$ is injective, use first that the images of two
  points $(r,\vartheta;\; s_1,\dotsc,s_n;t_1,\dotsc,t_n)$ and
  $(r',\vartheta';\; s_1',\dotsc,s_n';t_1',\dotsc,t_n')$ by $\Psi$ can
  only agree if $r = r'$, $\vartheta = \vartheta'$, and $t_j = t_j'$
  for all $j= 1,\dotsc, n$, and if $s_j - s_j'$ is for every
  $j=1,\dotsc,n$ an integer multiple of $2\pi$.  The equation
  $\hbar_1 \, s_1 + \dotsm + \hbar_n \, s_n = \hbar_1\, s_1' + \dotsm
  + \hbar_n \, s_n'$ implies that
  $\hbar_1\, (s_1 - s_1') + \dotsm + \hbar_n \, (s_n - s_n') = 0$, but
  by our assumption that the $\hbar_j$ are linearly independent over
  $\QQ$ it follows that all coefficients~$s_1 - s_1'$ need to vanish
  so that $\Psi$ is injective.

  \medskip
  
  We still need to verify that the image of $\Psi$ lies in the
  neighborhood $B(\epsilon) \times \DD_{<\delta}\bigl(T^*\TT^n\bigr)$.
  To respect the $z$-height, it suffices to choose
  $\hbar_1+\dotsm+\hbar_n < \epsilon / C$, so that the $z$-coordinate
  of $\Psi$ is bounded by $\epsilon$.  For the radius of the fibers in
  $\DD_{<\delta}\bigl(T^*\TT^n\bigr)$ choose
  $\hbar_j < \frac{\delta}{2\sqrt{n}}$ so that $\Psi$ also stays
  inside the $\delta$-disk bundle of $T^*\TT^n$.
\end{proof}

\section{The standard overtwisted contact structure on $\RR^3$}
\label{sec: overtwisted dim 3}

For a cylindrical box of height~$h$ around the standard overtwisted
disk $\DOT$ in $(\RR^3, \aOT)$ of the form
\begin{equation*}
  B(h) := \DD^2_{< \pi + \delta} \times (-h,h) \;,
\end{equation*}
it is well-known that the choice of $h$ is not relevant for the
contactomorphism type.  Below we will give a contact vector field that
can be used to prove this fact by hand, but instead one can also
easily convince oneself that all $B(h)$ are \emph{overtwisted at
  infinity} which uniquely characterizes by \cite{EliashbergContactR3}
a contact structure on $\RR^3$.

The main technical problem that we will deal with in this article is
to show that the choice of the $h$-parameter also remains largely
irrelevant for the contactomorphism type when we take the product with
a Liouville domain.

\medskip

We will now discuss a contact vector field~$X$ whose flow compresses
any large box~$B(h)$ into an arbitrarily small neighborhood of the
standard overtwisted disk.  Ideally we would like $X$ to be a strict
contact vector field or at least to have a constant scaling factor~$c$
such that $\lie{X}\aOT = c\cdot \aOT$.  Unfortunately such a vector
field cannot exist: firstly, $X$ should be contracting and thus it
needs to reduce the total volume.  This implies that $c$ would have to
be strictly negative on a predominant part of its domain.  On the
other hand, $c$ cannot be \emph{everywhere} strictly negative as this
would allow us to squeeze with the strategy of Section~\ref{sec:
  squeeze in high dimensions} a high-dimensional overtwisted chart
into an arbitrarily ``thin'' set, thus contradicting the existence of
tight contact manifolds.

The following vector field arose in discussions with Patrick Massot
around 2010,
\begin{equation}
  X := - z\,\partial_z  -
  \frac{r \cos(r) \sin(r)}{r + \cos(r) \sin(r)}\, \partial_r
  \label{eq: contact vf RR3}
\end{equation}
is well-defined and induces a contact flow on $(\RR^3, \aOT)$.

Even though this vector field might at first appear overly
complicated, note that all coordinates in $X$ are uncoupled.  This
allows us to see that its time~$T$ flow preserves the
$\vartheta$-coordinate, and it contracts the $z$-coordinate by the
factor~$e^{-T}$.  The radial coordinate is fixed on every cylinder of
radius $r\in \frac{\pi}{2} \, \NN$.  The cylinders of radius
$r\in \frac{\pi}{2} + \pi \, \NN$ are repelling; the cylinders of
radius $r\in \pi \, \NN$ are attracting, in the sense that all points
between these cylinders are pushed away from the repelling cylinder
towards the attracting one, see Fig~\ref{fig:contact vector field for
  aOT}.

\begin{figure}[htbp]
  \centering
  \includegraphics[height=6cm,keepaspectratio]{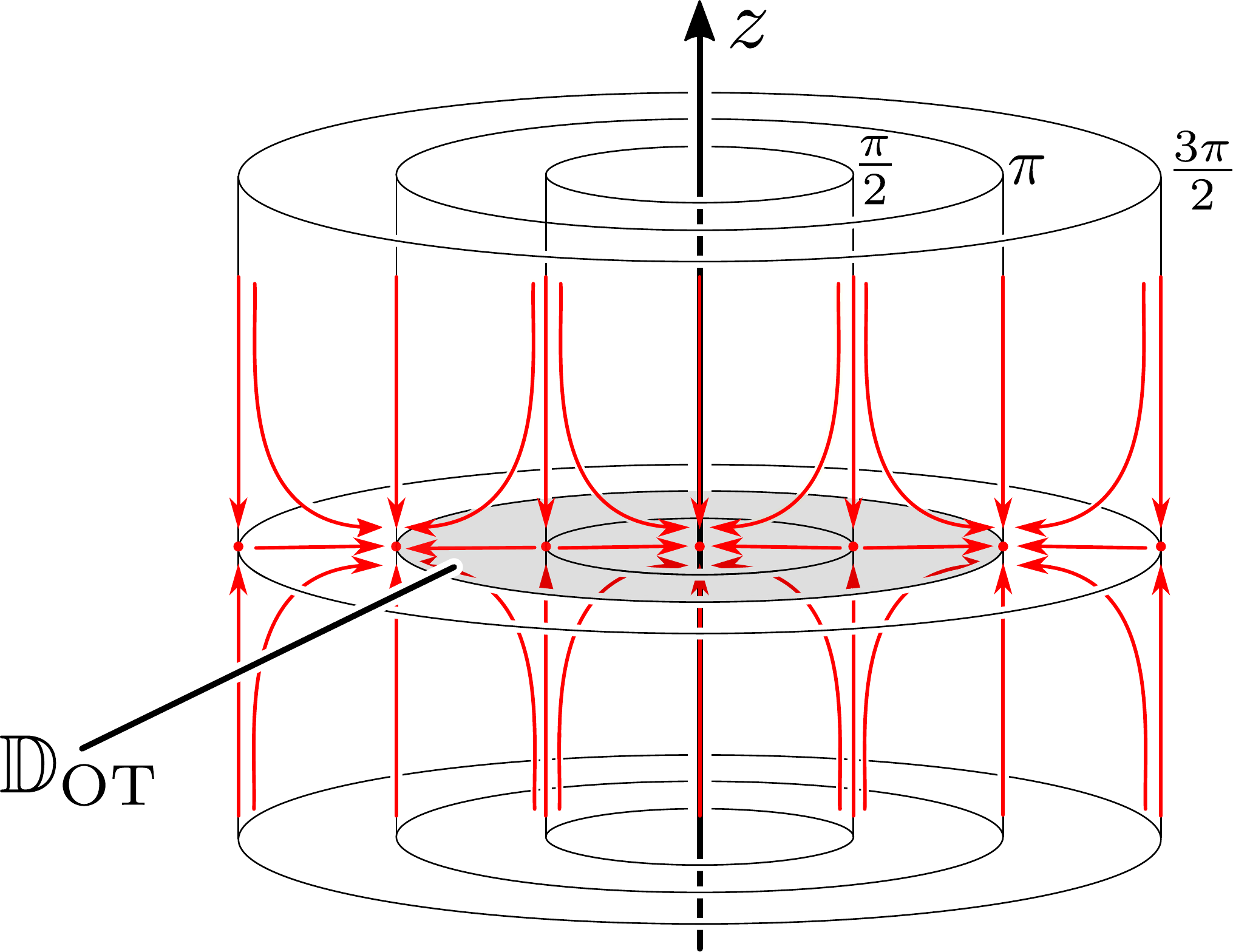}
  \caption{The flow of $X$ preserves the cylinders of radius
    $r\in \frac{\pi}{2} + \pi \, \NN$.  Note that the boundary of the
    standard overtwisted disk sits on an attracting
    cylinder. \label{fig:contact vector field for aOT}}
\end{figure}

\medskip

The height of the box~$B(h)$ is squeezed by the flow of $X$ by an
exponential factor, while the radial direction also shrinks, without
becoming ever smaller than $\pi$ though.  This way an arbitrarily tall
box~$B(h)$ can be squeezed by a contactomorphism into an arbitrarily
small neighborhood of the standard overtwisted disk.  In fact, one can
convince oneself that the image of a box~$B(h)$ for a certain choice
of $\delta > 0$ will be of the form
$\DD^2_{< \pi + \delta'} \times (-h',h')$ for some smaller $h'> 0$ and
$\delta' > 0$.

Finally note that $\lie{X}\aOT = g\cdot \aOT$ where
\begin{equation}
  g(r,\vartheta,z) =
  - \frac{\cos r\, (r\cos r + \sin r)}{r + \cos r \sin r} \;.
  \label{eq: scaling factor for vf on RR3}
\end{equation}
As we claimed above, $g$ takes both positive and negative values.
More precisely:

\begin{lemma}\label{lemma: behavior h}
  The function~$g\colon B(h) \to \RR$ is everywhere negative on $B(h)$
  except for the domain lying between $r_m = \pi/2$ and
  $r_M\approx 2.03$ such that $r_M =-\tan r_M$.  See also the graph in
  Fig.~\ref{fig:graph function g}.
\end{lemma}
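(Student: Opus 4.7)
The plan is to exploit the fact that $g(r,\vartheta,z)$ depends only on the radial coordinate $r$, so the sign analysis reduces to a one-variable problem on $r \in (0,\pi+\delta)$. The first step would be to dispose of the denominator: the derivative of $r + \cos r \sin r$ is $1 + \cos(2r) = 2\cos^2 r \ge 0$, and this function vanishes at $r=0$, so it is strictly positive for all $r>0$. Consequently the sign of $g$ is opposite to the sign of $N(r) := \cos r \cdot (r\cos r + \sin r)$, and it suffices to locate the zeros of each factor.

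The factor $\cos r$ changes sign exactly at $r=\pi/2$ on our domain. For the second factor, set $h(r) := r\cos r + \sin r$ and note that away from $r=\pi/2$ the equation $h(r)=0$ is equivalent (after dividing by $\cos r$) to $r=-\tan r$. On $(0,\pi/2]$ both summands of $h$ are non-negative with $h(\pi/2)=1$, so $h>0$ there. On $(\pi/2,\pi)$ the function $-\tan r$ is strictly decreasing from $+\infty$ down to $0^{+}$, while the identity $r$ is strictly increasing from $\pi/2$ to $\pi$, so the two curves meet in exactly one point $r_M$; numerically this is $r_M \approx 2.03$. Continuity together with $h(\pi)=-\pi<0$ then forces $h>0$ on $(\pi/2,r_M)$ and $h<0$ on $(r_M,\pi+\delta)$, provided $\delta$ is small enough that no further zero of $h$ is crossed.

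Combining the two sign patterns finishes the argument: on $(0,\pi/2)$ both factors of $N$ are positive so $g<0$; at $r=\pi/2$ and at $r=r_M$ one factor vanishes and $g=0$; on $(\pi/2,r_M)$ the two factors have opposite signs, giving $g>0$; on $(r_M,\pi+\delta)$ both factors are negative, giving $g<0$. This is exactly what the lemma asserts. The one place where anything beyond direct computation is needed is the uniqueness of $r_M$ in $(\pi/2,\pi)$, but this is handled by the monotonicity observation above, so there is no real obstacle—the difficulty is essentially bookkeeping of signs.
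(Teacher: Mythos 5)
Your proof is correct and follows the same route as the paper: observe that $g$ depends only on $r$, check the denominator is positive, and track the sign changes of the two factors $\cos r$ and $r\cos r + \sin r$ in the numerator. You simply supply more detail than the paper does, in particular verifying the positivity of the denominator and the uniqueness of the zero $r_M$ via the monotonicity of $r$ versus $-\tan r$.
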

\begin{proof}
  The function~$g$ only depends on the radial coordinate~$r$.  Its
  denominator is everywhere positive while the numerator changes once
  its sign at $r = \pi/2$, where $\cos r$ vanishes, and then again at
  $r\approx 2.03$, where $r\cos r + \sin r$ vanishes.
\end{proof}

We can also read off from the graph, Fig.~\ref{fig:graph function g},
that $g$ is everywhere smaller than $0.1$; i.e., even though $g$
becomes positive it only becomes very slightly so.

\begin{figure}[htbp]
  \centering
  \includegraphics[height=5cm,keepaspectratio]{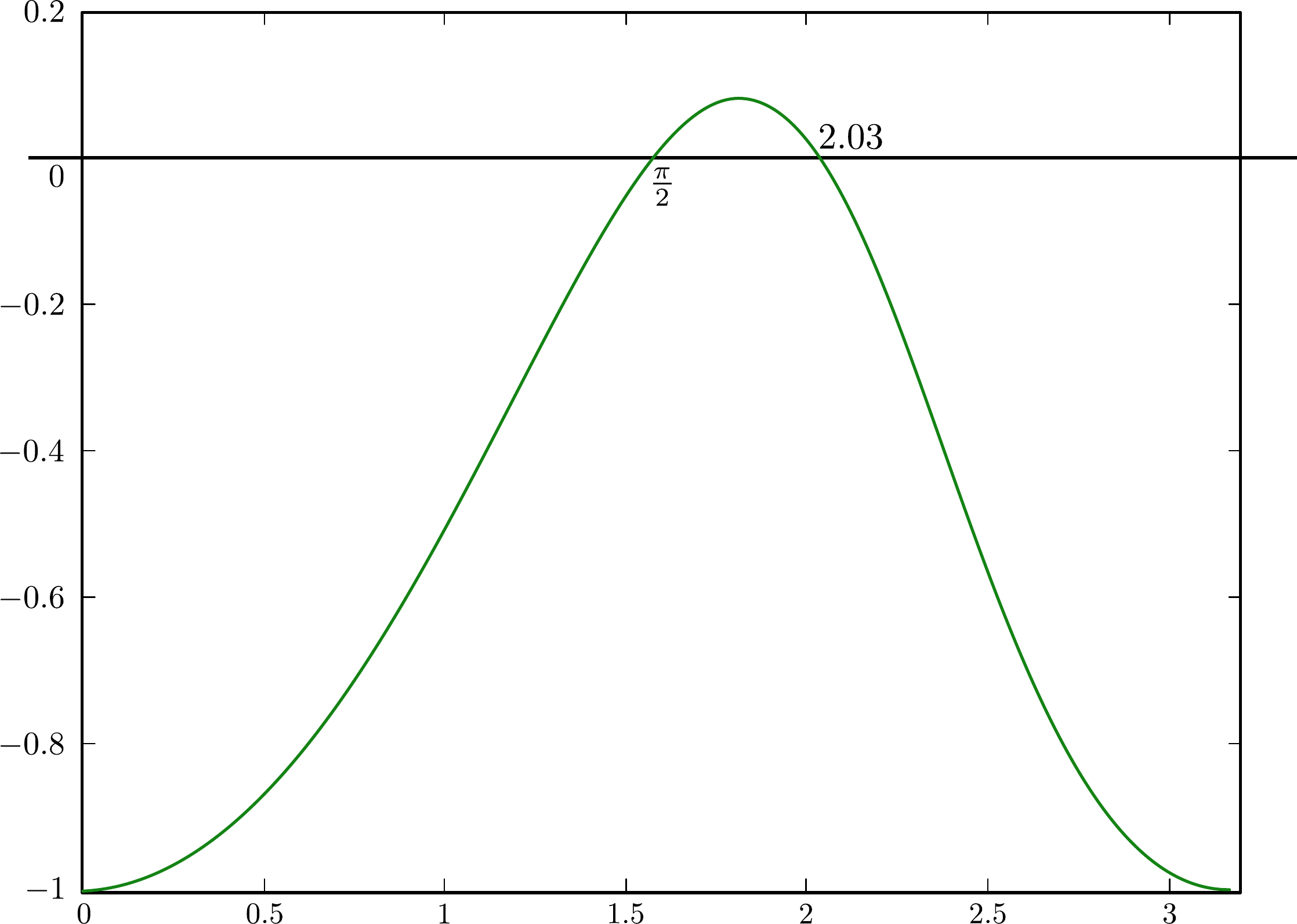}
  \caption{The graph of
    $g = - \frac{\cos r\, (r\cos r + \sin r)}{r + \cos r \sin
      r}$ \label{fig:graph function g}}
\end{figure}

\section{Contactomorphism on product structure}
\label{sec: squeeze in high dimensions}

Let $(M,\xi)$ be a contact manifold with contact form~$\alpha$.
Assume that $X$ is a contact vector field such that
$\lie{X}\alpha = g\cdot \alpha$ for some function $g\colon M \to \RR$.

Choose an exact symplectic manifold~$(W,d\lambda)$ that has a
Liouville vector field~$Y$, then we easily check that the contact form
of $\bigl(M\times W, \alpha + \lambda\bigr)$ is preserved by the
vector field
\begin{equation*}
  \hat X = X + g\cdot Y \;.
\end{equation*}

Note that even if $Y$ points outwards and is expanding on
$(W,d\lambda)$, the behavior of $\hat X$ on the product
manifold~$M\times W$ is controlled in $W$-direction by the sign of the
function~$g$ that might take positive or negative values.

\medskip

We consider now the main example we will be interested in: Let
$B(h) \subset \bigl(\RR^3,\aOT)$ be a box of height~$h$ as defined in
\eqref{eq: box}, and let
$\bigl(L, \;\langle \cdot,\cdot \rangle \bigr)$ be a Riemannian
manifold that does not need to be closed or geodesically complete.  We
denote the disk bundle of radius~$c$ in $\bigl(T^*L, d\lcan\bigr)$ by
\begin{equation*}
  \DD_{<c}\bigl(T^*L\bigr)  := \bigl\{ \nu\in T^*L\bigm|\;  \norm{\nu} < c\bigr\} \;.
\end{equation*}

\begin{proposition}\label{propo: compress height with little width}
  There exists a positive constant~$\mu_0 < 7/6$ such that every
  contact domain
  \begin{equation*}
    \Bigl(B(h) \,\times\, \DD_{<c}\bigl(T^*L\bigr), \;
    \ker\bigl(\aOT + \lcan\bigr) \Bigr)
  \end{equation*}
  with $c>0$ can be embedded by a contactomorphism into
  \begin{equation*}
    \Bigl(B(h') \,\times\, \DD_{<\mu_0\cdot c}\bigl(T^*L\bigr) \;,
	\ker\bigl(\aOT + \lcan\bigr) \Bigr)
  \end{equation*}
  independently of the choices of $h, h'>0$.
\end{proposition}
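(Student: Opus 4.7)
The plan is to apply the general construction from the beginning of
Section~\ref{sec: squeeze in high dimensions} with
$(M,\alpha)=(\RR^3,\aOT)$ and with $X$ the explicit contact vector
field of \eqref{eq: contact vf RR3}, whose scaling function is the
$g$ of \eqref{eq: scaling factor for vf on RR3}. On the Liouville
side I take $(W,d\lambda)=(T^*L,d\lcan)$ with $Y$ the standard
fiber-radial Liouville vector field (which satisfies
$\iota_Y\lcan=0$). The contactomorphism we are after will be the
time-$T$ map of $\hat X=X+g\cdot Y$, restricted to
$B(h)\times \DD_{<c}(T^*L)$, for a suitable $T>0$.

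On the $\RR^3$-factor the flow of $\hat X$ coincides with the flow of
$X$ described in Section~\ref{sec: overtwisted dim 3}: the
$\vartheta$-coordinate is preserved, the $z$-coordinate contracts by
$e^{-T}$, and the $r$-coordinate is pushed towards $r=0$ on
$[0,\pi/2)$ and towards $r=\pi$ on $(\pi/2,\pi+\delta)$
(exponentially fast near the attracting circles). Hence, taking any
$T$ beyond a threshold depending only on $h$ and $h'$ forces the
$B(h)$-component of the image to lie in $B(h')$, and this step is
completely uniform in the fiber data.

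The actual work happens on the $T^*L$-factor. Because $g$ depends
only on the $\RR^3$-coordinates and $Y$ is fiber-radial, the fiber
coordinate along an orbit satisfies
$\nu(T)=\exp\bigl(\int_0^T g(r(s))\,ds\bigr)\cdot \nu(0)$, where
$r(s)$ is the radial coordinate along the $X$-orbit. Writing
$\dot r = f(r)$ with $f(r)=-\,r\cos r\,\sin r/(r+\cos r\,\sin r)$
and performing the substitution $ds=dr/f(r)$, a short algebraic
manipulation unveils the telescoping identity
\begin{equation*}
  \frac{g(r)}{f(r)} \;=\; \frac{r\cos r+\sin r}{r\sin r}
  \;=\; \frac{d}{dr}\log\abs{r\sin r}\;,
\end{equation*}
so the fiber-scaling factor is exactly
$\abs{r_T\sin r_T}/\abs{r_0\sin r_0}$, with $r_T$ the image of $r_0$
under the time-$T$ flow of $X$.

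It remains to bound this ratio uniformly in $r_0\in(0,\pi+\delta)$
and $T\ge 0$. The function $r\mapsto r\sin r$ is monotone on each of
the intervals $(0,\pi/2)$, $(\pi/2,r_M)$, $(r_M,\pi)$,
$(\pi,\pi+\delta)$ and vanishes at $r\in\{0,\pi\}$; combined with the
dynamics of $X$ (Fig.~\ref{fig:contact vector field for aOT}), which
determines in which subinterval $r_T$ lies relative to $r_0$, a
case-by-case inspection shows that the ratio is $\le 1$ except when
$r_0\in(\pi/2,r_M)$ and $r_T\in(r_0,r_M]$, in which case it is
bounded above by $r_M\sin r_M/(r_0\sin r_0)$. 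The overall supremum
$r_M\sin r_M/(\pi/2)$ is approached only in the limit
$r_0\to(\pi/2)^+$, $r_T\to r_M$, and a numerical check gives
$r_M\sin r_M/(\pi/2)\approx 1.158<7/6$. Taking $\mu_0$ to be any
number in the resulting gap finishes the proof. The main obstacle is
precisely the identification of the antiderivative
$\log\abs{r\sin r}$ of $g/f$: without that telescoping trick the
integral bounding the fiber growth is essentially opaque, and the
thin numerical margin below $7/6$ is the narrow thread on which the
whole proposition hangs.
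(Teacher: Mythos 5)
Your proof follows the paper's own argument essentially verbatim: the same vector field $\hat X = X + g\cdot Y$, the same factorization of its flow, the same substitution $ds = dr/f(r)$ yielding the antiderivative $\ln(r\sin r)$ of $g/f$, and the same numerical bound $r_M\sin r_M/(\pi/2) < 7/6$. One small imprecision: the ratio can also exceed $1$ when $r_0\in(\pi/2,r_M)$ and $r_T\in(r_M,\pi)$ (not only for $r_T\in(r_0,r_M]$), but the supremum over $T$ for fixed $r_0$ is still attained at $r_T=r_M$, so your final bound is unaffected.
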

\begin{proof}
  For $h' \ge h$ the claim is obvious; for $h' < h$ the strategy is to
  use the flow of the vector field $\hat X := X + g\cdot Y$ with $X$
  and $g$ introduced in the previous section, and $Y$ the Liouville
  vector field on $T^*L$ that is defined by $\iota_Y d\lcan = \lcan$.

  By writing $Y$ in a coordinate chart, one easily convinces oneself
  that the time~$t$ flow of $Y$ simply consists of multiplying the
  fiber of $T^*L$ by $e^t$.  In particular, even if $L$ is open or has
  boundary there is no danger that the flow of $\hat X$ escapes
  transversely through the fibers of $T^*L$. Now let us study the
  behavior of the flow $\Phi^{\hat X}_T$ in more details.


\medskip

Recall that the coordinates are uncoupled by the flow of $X$ given in
\eqref{eq: contact vf RR3}.  We can thus write
  \begin{equation*}
    \Phi^X_T\bigl(r,\vartheta, z\bigr)
    = \bigl(F(r,T), \vartheta, e^{-T}\, z\bigr) \;,
  \end{equation*}
  where $F(r,T)$ is the solution of the ODE
  \begin{equation*}
    y'(t) = - \frac{y(t) \cos y(t) \sin y(t)}{y(t) + \cos y(t) \sin y(t)}\;,
    \quad \text{ and }\quad y(0) = r \;.
  \end{equation*}
  The flow~$\Phi^{\hat X}_T$ is therefore of the form
  \begin{equation*}
    \Phi^{\hat X}_T\bigl(r, \vartheta,z;\; \bfq,\bfp\bigr)
    = \Bigl(\Phi^X_T\bigl(r,\vartheta, z\bigr);\;
    \bfq, e^{G(r,T)}\cdot \bfp \Bigr) \;.
  \end{equation*}
  That is, the flow on the $\RR^3$-factor simply reduces to the
  corresponding flow of $X$ and can be evaluated independently of the
  $T^*L$-part; the flow on the cotangent bundle factor is obtained by
  multiplying the fiber direction by a positive function~$e^G$ that
  can be computed via
  \begin{equation}
    G(r,t) =  \int_0^t g\bigl(\Phi^X_s(r,\vartheta, z)\bigr)\,  ds
    = \int_0^t g\bigl(F(r,s)\bigr)\,  ds \;.  \label{eq: fnc G}
  \end{equation}

  \medskip

  If $T$ is chosen to be $T = \ln \frac{h}{h'}$, it follows that
  $\Phi^{\hat X}_T$ squeezes the first factor of
  $B(h) \times \DD_{<c}\bigl(T^*L\bigr)$ into $B(h')$.  By
  Lemma~\ref{lemma: bound on G} below, $G(r,t) < \ln (7/6)$ for any
  point in $B(h)$ and any $t\ge 0$.  This implies as desired that the
  initial domain is squeezed into
  $B(h') \times \DD_{<\frac{7}{6}\cdot c}\bigl(T^*L\bigr)$.
\end{proof}

\begin{lemma}\label{lemma: bound on G}
  The function~$G(r,t)$ given in \eqref{eq: fnc G} is bounded from
  above by $\ln(7/6)$ for all $r\in [0, \pi +\delta)$ and all
  $t\in [0,\infty)$.
\end{lemma}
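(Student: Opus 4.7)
The strategy is to integrate $G$ explicitly, by recognising the integrand as an exact derivative along the flow. First I would verify the algebraic identity
\[
g\bigl(F(r,s)\bigr) \;=\; \frac{d}{ds}\ln\abs{F(r,s)\,\sin F(r,s)},
\]
which is a direct chain-rule computation using the defining ODE $\partial_s F = -F\cos F\sin F/(F+\cos F\sin F)$; both sides simplify to $-\cos F\,(F\cos F+\sin F)/(F+\cos F\sin F)$. By the fundamental theorem of calculus this yields the closed form
\[
G(r,t) \;=\; \ln\abs{F(r,t)\,\sin F(r,t)} \;-\; \ln\abs{r\,\sin r}.
\]

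Next I would exploit the elementary dynamics of the ODE. Its right-hand side vanishes on the cylinders $\{r=k\pi/2\}$, so each is invariant and every trajectory is monotone within one of the three intervals $(0,\pi/2)$, $(\pi/2,\pi)$, $(\pi,\pi+\delta)$ relevant to the lemma. For $r\in[0,\pi/2]$ the flow moves $y$ toward $0$ and $y\sin y$ is increasing there, so $G\le 0$; for $r\in[\pi,\pi+\delta)$ the flow moves $y$ toward $\pi$, where $\abs{y\sin y}$ vanishes, so $G\le 0$ again. The only nontrivial case is $r\in(\pi/2,\pi)$: here the flow pushes $y$ monotonically toward $\pi$, and $y\sin y$ attains its maximum on $[\pi/2,\pi]$ at the interior critical point $r_M\approx 2.03$ of Lemma~\ref{lemma: behavior h}. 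For $r\in(\pi/2,r_M]$, since $r\sin r$ is increasing on $[\pi/2,r_M]$ with $r\sin r\ge\pi/2$, one gets
\[
G(r,t) \;\le\; \ln(r_M\sin r_M)-\ln(r\sin r) \;\le\; \ln\frac{2\,r_M\sin r_M}{\pi};
\]
for $r\in[r_M,\pi)$ the trajectory stays in the region where $y\sin y$ is decreasing past $r$, so $F(r,t)\sin F(r,t)\le r\sin r$ and $G\le 0$.

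It then remains to verify the numerical inequality $2\,r_M\sin r_M/\pi<7/6$. The defining equation $r_M=-\tan r_M$ gives $\sin r_M=r_M/\sqrt{1+r_M^2}$, hence $r_M\sin r_M=r_M^2/\sqrt{1+r_M^2}\approx 1.82$, whereas $7\pi/12\approx 1.833$, which provides the required strict inequality with a small but positive margin. The main obstacle in the proof is really only the first step: spotting that the apparently complicated integrand admits the clean antiderivative $\ln\abs{y\sin y}$ along the flow. Once this identity is in place, what remains is routine monotonicity analysis on three subintervals plus a short numerical check.
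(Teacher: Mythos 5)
Your proposal is correct and takes essentially the same route as the paper: the key step in both is recognizing that $\ln(y\sin y)$ is an antiderivative of $g/f$ along the flow, which the paper phrases as the substitution $u = F(r,s)$ in the integral and you phrase as the chain-rule identity $g(F(r,s)) = \frac{d}{ds}\ln\abs{F\sin F}$ followed by the fundamental theorem of calculus; after that the case analysis over the three intervals $[0,\pi/2]$, $(\pi/2,\pi)$, $[\pi,\pi+\delta)$ and the numerical check $2r_M\sin r_M/\pi < 7/6$ match the paper. A small merit of your write-up is that it treats the subinterval $r\in[r_M,\pi)$ explicitly (where the trajectory stays in the region where $y\sin y$ is decreasing, so $G\le 0$), whereas the paper's estimate is written only for $r\in(\pi/2,r_M]$ and leaves this case implicit.
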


The sharp upper bound in the lemma is
$\ln \bigl(\frac{2 r_M \sin r_M}{\pi} \bigr)$ with $r_M$ specified in
Lemma~\ref{lemma: behavior h}.

\begin{proof}
  Denote the $r$-coefficient of the vector field~$X$ by
  \begin{equation*}
    f(r) = - \frac{r \cos(r) \sin(r)}{r + \cos(r) \sin(r)} \;.
  \end{equation*}
  Then $F(r,t)$ is the flow of the field
  $X_r (r) := f(r)\, \partial_r$ on $[0, \pi + \delta)$, that is, $F$
  is the solution of the ordinary differential equation
  $\partial_t F(r,t) = f\bigl(F(r,t)\bigr)$ with initial condition
  $F(r,0) = r$.

  The only critical points of $X_r$ are the points
  $r\in \frac{\pi}{2} \, \NN$; see also Fig~\ref{fig:contact vector
    field for aOT}.  Furthermore, recall that $r = \frac{\pi}{2}$ and
  $r = \frac{3\pi}{2}$ are repelling, and that $r=\pi$ is an
  attracting critical point.
 
  According to Lemma~\ref{lemma: behavior h}, the function~$g$ is
  everywhere on $[0, \pi +\delta)$ negative except for the interval
  $[r_m,r_M]$ with $r_m = \frac{\pi}{2}$, and $r_M \approx 2.03$ given
  by $r_M = -\tan r_M$.

  Since all trajectories of $X_r$ starting in
  $\bigl[0, \frac{\pi}{2}\bigr]$ are trapped inside this interval, the
  function~$G(r,t)$ will be negative for all
  $r\in \bigl[0, \frac{\pi}{2}\bigr]$ and all $t\ge 0$.  Similarly,
  the points in $[\pi,\pi+\delta)$ are pulled by the flow towards
  $r = \pi$ without ever crossing this point.  Thus $G(r,t)$ will also
  be negative for all $r\in [\pi,\pi+\delta)$ and all $t \ge 0$.

  \smallskip
  
  It only remains to understand the behavior of $G(r,t)$ for
  $r\in \bigl(\frac{\pi}{2}, \pi\bigr)$.  Since $f$ is strictly
  positive on this interval, it follows that, for every initial value
  $r\in \bigl(\tfrac{\pi}{2}, \pi\bigr)$,
  \begin{equation*}
    F(r,\cdot)\colon \RR \to \bigl(\tfrac{\pi}{2}, \pi\bigr)
  \end{equation*}
  is an orientation preserving diffeomorphism, and in particular there
  is a unique time $T_r\in \RR$ such that $F(r,T_r) = r_M$.

  For every fixed $r\in \bigl(\tfrac{\pi}{2}, r_M\bigr]$ and all
  positive $t\ge 0$, the upper bound of $G(r,t)$ in \eqref{eq: fnc G}
  is then given by
  \begin{equation*}
    G(r,T_r) =  \int_0^{T_r} g\bigl(F(r,s)\bigr)\,  ds\;,
  \end{equation*}
  because $g$ is strictly positive up to $t = T_r$ so that
  $G(r,\cdot)$ increases up to that moment; for all later times
  $t > T_r$, the trajectory $F(r,t)$ lies in the zone $[r_M,\pi)$
  where $g$ is negative so that $G(r,t) \le G(r,T_r)$ for all
  $t \ge T_r$.

  \medskip
  
  To compute $G(r,T_r)$ use that
  $F(r,\cdot)\colon \RR \to (\pi/2, \pi)$ is for every choice of
  $r\in \bigl(\tfrac{\pi}{2}, \pi\bigr)$ a diffeomorphism, so that we
  can substitute $u = F(r,s)$ in the integral using that
  $\frac{du}{ds} = f\bigl(F(r,s)\bigr) = f(u)$, and obtain
  \begin{equation*}
    \begin{split}
      G(r,T_r) &= \int_0^{T_r} g\bigl(F(r,s)\bigr)\, ds =
      \int_{F(r,0)}^{F(r,T_r)} \frac{g(u)}{f(u)}\, du =
      \int_r^{r_M} \frac{u\cos u + \sin u}{u \sin u}\, du  \\
      &= \Bigl.\ln\bigl(u \sin u\bigr)\Bigr|_{r}^{r_M} = \ln \bigl(
      \frac{r_M \sin r_M}{r \sin r} \bigr) \;.
    \end{split}
  \end{equation*}
  The denominator $r \sin r$ is increasing on
  $\bigl[\tfrac{\pi}{2}, r_M]$ so that its smallest value on this
  interval is attained at $r = \tfrac{\pi}{2}$.  We obtain the
  estimate
  \begin{equation*}
    G(r,t)  \le \ln \bigl( \frac{r_M \sin r_M}{r \sin r} \bigr)
    < \ln \bigl(\frac{r_M \sin r_M}{\tfrac{\pi}{2} \, \sin \tfrac{\pi}{2}} \bigr)
    =  \ln \bigl(\frac{2 r_M \sin r_M}{\pi} \bigr) < \ln (7/6)
  \end{equation*}
  finishing the proof.
\end{proof}

In particular, we obtain the following result.

\begin{corollary}\label{cor: can embedded high domain close to
    hypersurface}
  Let $L$ be a manifold that does not need to be closed, and let
  \begin{equation*}
   \Sigma:= \DD^2_{\le \pi} \,
    \times\, \DD_{<c_0}\bigl(T^*L\bigr)
  \end{equation*}
  be a hypersurface in a contact manifold~$(M,\xi)$ such that the
  singular distribution induced by $\xi$ on the hypersurface agrees
  with the kernel of the $1$-form
  $\beta = r\sin r\, d\vartheta + \lcan$, where $(r,\vartheta)$
  denotes the polar coordinates on $\DD^2_{\le \pi}$.

  \smallskip
  
  Let $c > 0$ be such that $\mu_0 c < c_0$ with the constant
  $\mu_0 < 7/6$ in Proposition~\ref{propo: compress height with little
    width}.  Then we can embed the contact domain
  \begin{equation*}
    \Bigl(B(h) \,\times\, \DD_{<c}\bigl(T^*L\bigr), \;
    \ker\bigl(\aOT + \lcan\bigr) \Bigr)
  \end{equation*}
  of ``width''~$c>0$ and of any chosen ``height''~$h>0$ into an
  arbitrarily small neighborhood of the hypersurface
  $\DD^2_{\le \pi} \, \times\, \DD_{<c_0}\bigl(T^*L\bigr)$.
\end{corollary}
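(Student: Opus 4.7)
The plan is to combine two tools that have already been set up: the squeezing estimate of Proposition~\ref{propo: compress height with little width}, and the folklore result of Appendix~\ref{sec: contact germ by hypersurface} asserting that the induced singular distribution on a hypersurface determines the germ of the contact structure around it.

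First I would observe that the model $1$-form $\aOT + \lcan$ on $B(\epsilon) \times \DD_{<c_0}\bigl(T^*L\bigr)$, pulled back to the slice $\{z = 0\}$, gives exactly $\beta = r\sin r\, d\vartheta + \lcan$, because $\cos r\, dz$ vanishes on that slice. Hence the model contains the hypersurface $\DD^2_{\le \pi} \times \{0\} \times \DD_{<c_0}\bigl(T^*L\bigr)$ with induced singular distribution equal to $\ker\beta$. Applying the germ result of Appendix~\ref{sec: contact germ by hypersurface} to $(M,\xi)$ and to the standard model, I would produce a contactomorphism between an open neighborhood of $\Sigma$ in $M$ and an open neighborhood of $\DD^2_{\le \pi} \times \{0\} \times \DD_{<c_0}\bigl(T^*L\bigr)$ in the model. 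After shrinking, I may assume this latter neighborhood contains a product of the form $B(h') \times \DD_{<c_0}\bigl(T^*L\bigr)$ for any prescribed, arbitrarily small $h' > 0$.

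With this reduction in hand, the corollary becomes a direct application of Proposition~\ref{propo: compress height with little width}: for the given $c > 0$ satisfying $\mu_0 c < c_0$ and for any chosen height $h > 0$, the proposition provides a contact embedding of $\bigl(B(h) \times \DD_{<c}\bigl(T^*L\bigr),\; \ker(\aOT + \lcan)\bigr)$ into $\bigl(B(h') \times \DD_{<\mu_0 c}\bigl(T^*L\bigr),\; \ker(\aOT + \lcan)\bigr)$. Since $\mu_0 c < c_0$, the image sits inside $B(h') \times \DD_{<c_0}\bigl(T^*L\bigr)$, which via the germ identification lies in an arbitrarily small neighborhood of $\Sigma \subset M$.

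The delicate point that I would want to verify carefully is the applicability of the germ theorem in this setting: the hypersurface $\Sigma$ is typically noncompact, since $\DD_{<c_0}\bigl(T^*L\bigr)$ is an open disk bundle and $L$ itself need not be compact, so one must rely on the local nature of the argument of Appendix~\ref{sec: contact germ by hypersurface} rather than on any global Moser-type statement. Once that is granted, the corollary is essentially bookkeeping, with all the analytical content having already been absorbed into the squeezing proposition.
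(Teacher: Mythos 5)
Your proposal reproduces the paper's argument step for step: observe that the $\{z=0\}$ slice of the model $\bigl(B(\epsilon)\times\DD_{<c_0}(T^*L),\,\aOT+\lcan\bigr)$ induces exactly $\ker\beta$, invoke the germ result of Appendix~\ref{sec: contact germ by hypersurface} to identify a neighborhood of $\Sigma$ in $M$ with a neighborhood of the model slice, and then apply Proposition~\ref{propo: compress height with little width} to squeeze $B(h)\times\DD_{<c}(T^*L)$ inside. The subtlety you flag --- that $\Sigma$ is noncompact so the tubular neighborhood delivered by the germ theorem need not contain a uniform product $B(h')\times\DD_{<c_0}(T^*L)$ --- is genuine and is left equally implicit in the paper's own proof, so your attention to it is a point in your favor rather than a deviation.
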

\begin{proof}
  The induced singular distribution of a hypersurface determines by
  Proposition~\ref{contact germ determined by hypersurface} the germ
  of the contact structure on a neighborhood of the hypersurface.  We
  can thus assume that $\Sigma$ has a neighborhood~$U$ that is
  contactomorphic to
  $\bigl(B(\epsilon) \,\times\, \DD_{<c_0}\bigl(T^*L\bigr), \;
  \ker\bigl(\aOT + \lcan\bigr)\bigr)$ for small $\epsilon > 0$.  By
  Proposition~\ref{propo: compress height with little width} we can
  thus embed $B(h) \,\times\, \DD_{<c}\bigl(T^*L\bigr)$ into $U$.
\end{proof}

\appendix

\section{The Legendrian unknot is loose in a sufficiently large
  overtwisted chart}\label{sec: overtwisted then loose}

By now, it is well-known that Legendrian unknots are loose in ambient
manifolds containing a large neighborhood of an overtwisted chart.
The argument was first sketched in \cite{MurphyEtAlLoosePlastikstufe},
and Huang gave later a more detailed proof in \cite{HuangPlastistufe}.
Unfortunately though he uses piecewise smooth Legendrians so that a
lot of the potential clarity was lost.  In this appendix, we take a
new attempt to write down the proof.  Once a certain $3$-dimensional
result is accepted as a black-box, we show that the main step to pass
from dimension~$3$ to higher dimensions essentially boils down to a
careful inspection of the original definition of looseness given by
Murphy \cite{MurphyLoose}.

\bigskip

Let
$\bigl(\RR^{2n+1}, \xi_0 = \ker (dz - \sum_{j=1}^n y_j\,dx_j) \bigr)$
with coordinates $(\x,\y,z) = (x_1,\dotsc,x_n, y_1,\dotsc, y_n, z)$ be
the standard contact space.  The \defin{Legendrian unknot~$\Lambda_0$}
in $\RR^{2n+1}$ can be given by the embedding
\begin{equation}
  \SS^n \hookrightarrow (\RR^{2n+1},\xi_0), \;
  (\x,s) \mapsto  \bigl(\x,  - s\x, \tfrac{s^3}{3}\bigr) \;.
  \label{eq: standard Legendrian unknot}
\end{equation}
By extension, any Legendrian~$\Lambda$ in a contact manifold is called
a \defin{Legendrian unknot} if there exists a Darboux chart containing
$\Lambda$ in its interior such that $\Lambda$ agrees in the chart with
$\Lambda_0$.

\medskip

Let $(M,\xi)$ be a contact manifold.  We want to study Legendrians in
$M$ that look locally like product submanifolds in the following
sense: Suppose that there is an open subset~$U \subset M$ that is
diffeomorphic to $U_M \times U_W$ where $U_M$ is a manifold that
carries a contact form~$\alpha$, and $U_W$ is an open Liouville domain
with Liouville form~$\lambda$ such that
$\restricted{\xi}{U} = \ker (\alpha + \lambda)$.  Then assume that a
Legendrian~$\Lambda$ satisfies
\begin{equation*}
  \Lambda \cap \bigl(U_M\times U_W\bigr) = L \times N \;,
\end{equation*}
where $L$ is a Legendrian in $\bigl(U_M, \alpha\bigr)$ and $N$ is an
exact Lagrangian in $\bigl(U_W, \lambda\bigr)$ with
$\restricted{\lambda}{TN} = 0$.  Note that we do not assume in general
that $L$ or $N$ are closed.

The key notion we want to study in this appendix is due to Murphy
\cite{MurphyLoose}.

\begin{definition}
  Let $\Lambda$ be a Legendrian in $(M,\xi)$ that is locally in
  product form $L \times Z_\rho$ in a chart
  $\bigl(C \times V_\rho, \ker(\alpha_0 + \lcan)\bigr)$, where
  \begin{itemize}
  \item $C = \{(x,y,z)\in \RR^3|\; x, y, z \in (-1,1) \}$ is a cube
    with side lengths~$2$ and standard contact form
    $\alpha_0 = dz - y\, dx$,
  \item
    $V_\rho = \{(\bfq,\bfp)\in \RR^{2n-2}|\; \|\bfq\| < \rho, \|\bfp\|
    < \rho \}$ with Liouville form $\lcan = -\sum_j p_j\, dq_j$.
  \item $L$ is a properly embedded Legendrian arc whose front is a
    zig-zag and which is equal to the set $\{y=z=0\}$ near the
    boundary.
  \item $Z_\rho= \bigl\{ (\bfq,\bfp)\in V_\rho|\; \bfp= \0 \bigr\}$.
  \end{itemize}
  We say that $\Lambda$ is \defin{loose} if $\rho > 1$.
\end{definition}

\begin{remark}
  Note that if we replace the cube~$C$ in the definition above with
  any cube of side lengths smaller than $2$, then it can be seen with
  the argument in \cite[Proposition~4.4]{MurphyLoose} that the
  corresponding Legendrian is still loose.
\end{remark}

\medskip

The result we want to show in this appendix is the following
proposition.

\begin{proposition}
  There exists a $\rho_0 >0$ (that is independent of the dimension of
  $V_\rho$) such that the Legendrian unknot~$\Lambda_0$ is loose in
  every contact manifold
  \begin{equation*}
    \bigl(B(1)\times V_\rho\,,\; \ker(\aOT + \lcan)\bigr)
  \end{equation*}
  for which $\rho>\rho_0$.  Here,
  $\aOT = \cos(r)\,dz + r\sin(r)\, d\vartheta$ denotes the standard
  overtwisted contact form on $B(1)$, see also Section~\ref{sec:
    overtwisted dim 3}.
\end{proposition}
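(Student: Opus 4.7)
The plan is to exhibit a loose chart for $\Lambda_0$ by lifting the $3$-dimensional black box through the product structure of $M = B(1) \times V_\rho$, using the observation that Murphy's loose chart itself has the product form ``3-dim Darboux cube $\times$ Liouville disk of radius $>1$'' matching the factorization of $M$.

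Concretely, for $\rho > \rho_0$ I would first fix inside $M$ a product Darboux chart $U = U^{(3)} \times V_{\rho_1}$ with $U^{(3)} \subset B(1)$ a Darboux chart at a regular interior point of $B(1)$ and $V_{\rho_1} \subset V_\rho$ a Liouville disk of radius $\rho_1 > 1$. The contact form on $U$ splits as $\alpha_0^{(3)} + \lcan$ where $\alpha_0^{(3)} = dz - y_1\, dx_1$ and $\lcan = -\sum_{j \ge 2} y_j\, dx_j$. Applying the scaling contactomorphism $(\x, \y, z) \mapsto (\lambda\x, \mu\y, \lambda\mu z)$ of standard contact space (with $\lambda>1$ to enlarge the unknot's equator, and $\mu$ chosen small to flatten $\Lambda_0$ in the $\y$-direction), followed by a short preparatory Legendrian isotopy supported in $U$ that straightens the residual twist $y_j = -\mu s x_j$ of the standard parametrization near $s = 0$, I would place $\Lambda_0$ so that a small equatorial $n$-disk piece $D \subset \Lambda_0$ takes the exact product form $D = L_0 \times Z_{\rho_1}$: a Legendrian arc $L_0 \subset U^{(3)}$ times the full zero section $Z_{\rho_1} = \{\bfp = \0\} \subset V_{\rho_1}$. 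The lower bound $\rho_0$ is exactly what guarantees this placement is possible: $U$ has to be wide enough in the Liouville direction to contain $V_{\rho_1}$, and the rescaled unknot must still fit entirely inside $U$.

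I would then apply the $3$-dim black box to $L_0 \subset (B(1), \aOT)$, obtaining a compactly supported Legendrian isotopy $L_t$ of $L_0$ in $B(1)$ such that $L_1$ is a zig-zag Legendrian arc inside a Darboux cube $C \subset B(1)$. The essential dimension-lifting observation is that $L_t \times Z_{\rho_1}$ remains Legendrian in $M$ for every $t$, because $(\aOT + \lcan)|_{L_t \times Z_{\rho_1}} = \aOT|_{L_t} + \lcan|_{Z_{\rho_1}} = 0$. By the Legendrian isotopy extension theorem, this product Legendrian isotopy lifts to a compactly supported ambient contact isotopy $\Phi_t$ of $M$, so that $\Phi_1(\Lambda_0)$ contains the piece $L_1 \times Z_{\rho_1}$ inside the chart $C \times V_{\rho_1} \subset M$. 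The latter is a product Darboux chart of exactly the form demanded by Murphy's definition, with $\rho_1 > 1$, exhibiting $\Phi_1(\Lambda_0)$ as loose; invariance of looseness under ambient contact isotopy then yields the same conclusion for $\Lambda_0$. The constant $\rho_0$ is independent of the dimension of $V_\rho$ because the only inputs are the absolute $3$-dim data returned by the black box and the scalar condition $\rho_1 > 1$ on a single Liouville disk radius, which is insensitive to the number of cotangent pairs in $V_\rho$.

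The main obstacle is the initial placement: the standard parametrization $(\x, s) \mapsto (\x, -s\x, s^3/3)$ of $\Lambda_0$ does not literally decompose as (3-dim arc) $\times$ (flat Lagrangian) because of the coupling $y_j = -sx_j$, so a preparatory Legendrian isotopy is unavoidable. Making this isotopy feasible inside the tight Darboux chart $U$ is handled by first rescaling the unknot to make the twist as small as desired and then carrying out a $C^0$-small straightening isotopy; the interplay between the rescaling (which requires $U$ to be wide in both $\x$ and $\y$ directions) and the size condition $\rho_1 > 1$ (which requires $V_\rho$ to be wide) is what pins down the explicit value of $\rho_0$.
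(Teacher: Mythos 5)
Your proposal is broadly in the same spirit as the paper's proof --- reduce to the three--dimensional black box and transport the resulting zig--zag through the product structure --- but it is organised differently, and several steps as written have real gaps.

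\textbf{Conceptual difference.} The paper does \emph{not} isotope the three--dimensional slice after the high--dimensional unknot is embedded. It first uses the black box to select a specific unknot $L_0 = \mathrm{stab}(L_1)\subset B(1)$ which \emph{already} carries a zig--zag in a genuine Darboux cube $U_1$, then finds a chart $U_0\supset L_0$ in which $L_0$ is in standard unknot position, and embeds $\Lambda_0$ in $U_0\times V_{\rho'}$ so that its $\{\bfq=\bfp=\0\}$--slice is exactly $L_0$. Because $\aOT|_{U_0}=e^f(dz-y\,dx)$ with a nontrivial conformal factor, the paper must keep track of the bounds $c_0\le e^f\le C_0$; these enter the size of $\rho_0$. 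You instead start in a small \emph{genuine} Darboux chart $U^{(3)}$ at a regular point (sidestepping $e^f$, which is a reasonable simplification) and then want to push the $3$--dimensional slice of $\Lambda_0$ across $B(1)$ via the black box, lifting the isotopy to the product. The paper deliberately avoids this post--embedding isotopy lift.

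\textbf{Gaps.}
First, the slice $\Lambda_0\cap\{\bfq=\bfp=\0\}$ is a \emph{closed circle} (the $3$--dimensional unknot), not an arc, and the flattened region is $L_0\times Z_\delta$ with $L_0\cong\SS^1$, so $D\cong\SS^1\times\DD^{n-1}$, not an $n$--disk. Calling $L_0$ an arc is what lets you write ``apply the black box to $L_0$,'' but the black box is a statement about the \emph{closed} unknot; you do recover an arc, but only at the end, after cutting $L_0$ with the small cube $C$.

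Second, the product piece can never be $L_0\times Z_{\rho_1}$ with the \emph{full} zero section: $\Lambda_0$ is a compact embedded sphere, so it must curve off the set $\{\bfp=\0\}$ strictly before $\|\bfq\|$ reaches the edge of the chart. You can only achieve $L_0\times Z_{\rho_1'}$ for some $\rho_1'<\rho_1$; the condition $\rho_1'>1$ is what actually matters, and pinning it down is precisely what the paper's rescaling $(\bfq,\bfp)\mapsto(e^t\bfq,e^{-t}\bfp)$ is for. Your anisotropic rescaling plays the same role, so this gap is repairable, but the ``full zero section'' statement as written is false.

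Third, and most seriously, ``by the Legendrian isotopy extension theorem, this product Legendrian isotopy lifts to a compactly supported ambient contact isotopy'' is not justified. The family $L_t\times Z_{\rho_1}$ is a non--compact Legendrian moving without compact support in the $Z$--direction, so the isotopy extension theorem does not directly apply. What one should do is take the ambient contact isotopy $\phi_t$ of $B(1)$ produced by the $3$--dimensional black box, observe that $\phi_t^*\aOT=e^{g_t}\aOT$ for some family of functions $g_t$, and lift it as $\Phi_t(p;\bfq,\bfp)=\bigl(\phi_t(p);\bfq,e^{g_t(p)}\bfp\bigr)$, which is again a contactomorphism for $\aOT+\lcan$. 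But then one must check that the image of $\Lambda_0$ stays inside $B(1)\times V_\rho$, which requires a uniform bound on $e^{g_t}$, and that the final intersection $\Phi_1(\Lambda_0)\cap(C\times V_{\rho_1'})$ really is exactly (zig--zag arc)$\,\times Z_{\rho_1'}$ and nothing more. The paper's organisation --- choosing $L_0$ to be the stabilisation before embedding, flattening to $L_0\times Z_\delta$, then rescaling $(\bfq,\bfp)$ and finally restricting to $U_1\times V_\eta$ --- avoids all of this. If you keep the post--embedding isotopy route, you owe the reader these details, and they are exactly where the uniform constant $\rho_0$ is determined.
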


\begin{proof}
  By \cite{Dymara} or \cite{EtnyreOvertwisteKnots}, see also the
  details in \cite{MurphyEtAlLoosePlastikstufe}, the Legendrian unknot
  is in any overtwisted $3$-manifold the stabilization of another
  Legendrian knot~$L_1$.  More precisely, let $\bigl(B(1), \aOT\bigr)$
  be the cylindrical box surrounding an overtwisted disk described in
  Section~\ref{sec: overtwisted dim 3}.  We can assume that there is a
  Darboux chart~$U_1$ centered around a point of $L_1$ such that
  \begin{itemize}
  \item the restriction of $\aOT$ agrees in the coordinates of the
    Darboux chart with the standard form $dz - y\,dx$,
  \item $U_1$ is a cube of size~$\epsilon_1 < 1$,
  \item $L_1\cap U_1$ is the Legendrian arc $\{y=z=0\}$.
  \end{itemize}
  We stabilize $L_1$ inside $U_1$ by adding a zig-zag.  The resulting
  knot is then a Legendrian unknot~$L_0$.

  In particular, there exists a Darboux chart~$U_0$ in $B(1)$ such
  that $L_0$ lies in standard position~\eqref{eq: standard Legendrian
    unknot} inside $U_0$.  The restriction $\restricted{\aOT}{U_0}$
  with respect to the coordinates of $U_0$ will be of the form
  $e^{f(x,y,z)} \, \bigl(dz - y\,dx\bigr)$ for some smooth function
  $f$ that probably cannot be chosen to be equal to $0$.  Nonetheless,
  we can assume that there are constants~$c_0 >0$, and $C_0 > 1$ such
  that $c_0\le e^f\le C_0$.

  \medskip

  Using the Darboux chart~$U_0$ in $B(1)$, we can find in the product
  contact manifold
  \begin{equation*}
    \bigl(B(1)\times V_\rho\,,\; \ker(\aOT + \lcan)\bigr)
  \end{equation*}
  a higher dimensional Darboux chart $U_0\times V_{\rho'}$ for
  $\rho' = \frac{\rho}{C_0}$ embedded by
  \begin{equation*}
    U_0\times V_{\rho'}\to U_0\times V_\rho, \;
    (x,y,z; \bfq,\bfp) \mapsto \bigl(x,y,z; \bfq, e^{f(x,y,z)} \,\bfp\bigr)\;.
  \end{equation*}

  If $\rho' > 1$, we can embed the Legendrian unknot~$\Lambda_0$ into
  this Darboux chart using the standard map~\eqref{eq: standard
    Legendrian unknot}
  \begin{equation*}
    \SS^{n+1} \hookrightarrow \bigl(U_0\times V_{\rho'},\;\xi_0\bigr), \;
    (x_0, \x, s) \mapsto
    \bigl(x_0,  - s x_0, \tfrac{s^3}{3}; \x, - s \x \bigr) \;.
  \end{equation*}
  Note that the intersection of $\Lambda_0$ with the slice
  $B(1)\times\{\0\}$ is precisely the unknot~$L_0$ in $U_0$ that we
  had singled out in $B(1)$.

  By slightly deforming $\Lambda_0$ close to $L_0\times \{\0\}$ we can
  assume that $\Lambda_0$ is locally in product form
  $L_0\times Z_\epsilon$ for some small constant $\epsilon >0$.  This
  can be easily seen using the front projection (even tough ``seeing''
  the front projection requires from dimension~$7$ on some
  experience).  More explicitly, let $g\colon [0,1]\to [0,1]$ be a
  monotonous smooth function such that $g$ is equal to $0$ in a
  neighborhood of $0$ and $1$ in a neighborhood of $1$.  We can then
  consider the deformed sphere $\SS_g^{n+1} \subset \RR^{n+2}$ given
  by the equation
  \begin{equation*}
    x_0^2 + s^2 + g\bigl(\x^2\bigr) \cdot \x^2  = 1 \;.
  \end{equation*}
  Interpolating linearly between the equation of the round sphere and
  the deformed one, one sees that both are isotopic to each other.

  We define a Legendrian embedding of $\SS_g^{n+1}$ by
  \begin{equation*}
    \SS_g^{n+1} \hookrightarrow \bigl(U_0\times V_{\rho'},\;\xi_0\bigr), \;
    (x_0, \x, s) \mapsto
    \Bigl(x_0,  - s x_0, \tfrac{s^3}{3}; _; \x, - s \x\, \bigl(g(\x^2) + \x^2\, g'(\x^2) \bigr) \Bigr) \;.
  \end{equation*}
  We denote this deformed sphere by $\Lambda_0'$.  It is Legendrian
  isotopic to the initial Legendrian unknot, and it is composed of a
  cylindrical part $L_0\times Z_\delta$ for small values of $\x$ in
  $U_0\times V_\delta$.

  \smallskip
  
  Recall that the Darboux chart $U_1\times V_{\rho'}$ had been
  embedded into $B(1)\times V_\rho$ via the map
  $(x,y,z; \bfq,\bfp) \mapsto \bigl(x,y,z; \bfq, e^{f(x,y,z)}
  \,\bfp\bigr)$.  By looking at the preimage of this embedding, it
  follows that $\Lambda_0'$ also has a cylindrical segment in
  $B(1)\times V_\rho$.  More explicitly, we have shown that after an
  isotopy the Legendrian unknot~$\Lambda_0$ in $B(1)\times V_\rho$ has
  a cylindrical part of the form $L_0\times Z_{\delta'}$ in the open
  ball $B(1)\times V_{\delta'}$ with $\delta'= \delta/c_0$.

  We stretch out $\Lambda_0'$ in the $V_\rho$-direction of
  $B(1)\times V_\rho$ using an isotopy of the form
  $(x,y,z; \bfq, \bfp) \mapsto \bigl(x,y,z; \; e^t\,\bfq,
  e^{-t}\,\bfp\bigr)$ where the maximal size of $t \ge 0$ depends on
  the width of $V_\rho$.  If there is enough space, then we can
  suppose that the cylindrical part found above expands to be of the
  form $L_0\times Z_\eta$ in the open ball $B(1)\times V_\eta$ with
  $\eta > 1$.

  If we now consider the Darboux chart~$U_1 \subset B(1)$, we see that
  the intersection of the deformed Legendrian sphere with
  $U_1\times V_\eta$ is $L\times Z_\eta$, where $L$ is a Legendrian
  arc in $U_1$ whose front is a zig-zag, just as in the definition of
  looseness.  If $\eta > 1$ and if $U_1$ is a cube of size smaller
  than $1$ (see remark above following the definition of looseness),
  then the deformed unknot and thus also $\Lambda_0$ are loose.
\end{proof}

\section{Contact germ along a hypersurface}\label{sec: contact germ by
  hypersurface}

A folklore result states that a hypersurface in a contact manifold
determines the germ of the contact structure surrounding it.  Not
having found a proof for dimension $>3$ in the literature we have
decided to add it here.

\begin{proposition}\label{contact germ determined by hypersurface}
  Let $(M_0,\xi_0)$ and $(M_1,\xi_1)$ be two $(2n+1)$-dimensional
  contact manifolds, and let $\Sigma$ be a (not necessarily closed)
  $2n$-dimensional manifold.  Assume that there are two embeddings
  \begin{equation*}
    \iota_0\colon \Sigma \hookrightarrow M_0 \quad\text{ and }\quad
    \iota_1\colon \Sigma \hookrightarrow M_1 
  \end{equation*}
  such that the singular distributions $\dD_0 = (D\iota_0)^{-1}(\xi_0)$
  and $\dD_1 = (D\iota_1)^{-1}(\xi_1)$ agree.
  
  \medskip
  
  Then there exist a neighborhood $U_0\subset M_0$ of
  $\iota_0(\Sigma)$, a neighborhood $U_1\subset M_1$ of
  $\iota_1(\Sigma)$, and a contactomorphism
  \begin{equation*}
    \Phi\colon (U_0,\xi_0) \to (U_1,\xi_1)
  \end{equation*}
  such that $\Phi\circ \iota_0 = \iota_1$.
\end{proposition}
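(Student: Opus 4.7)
The plan is the standard Moser/Gray stability argument, adapted so that the resulting isotopy fixes $\iota_0(\Sigma)$ pointwise. I would proceed in three steps: first match the two contact forms along $\Sigma$, then identify tubular neighborhoods in which they agree as honest $1$-forms at every point of $\Sigma$, and finally Moser-interpolate between them.

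Step~1 (match along $\Sigma$). Pick contact forms $\alpha_j$ for $\xi_j$ defined near $\iota_j(\Sigma)$. The hypothesis $\dD_0 = \dD_1$ says that $\iota_0^*\alpha_0$ and $\iota_1^*\alpha_1$ are smooth $1$-forms on $\Sigma$ with identical kernels, so on the non-characteristic locus they differ by a smooth positive conformal factor. This factor extends smoothly and positively across the characteristic set---the single genuine local analytic point, which I would verify in coordinates using that both pullbacks arise from true contact forms---and after multiplying $\alpha_1$ by a positive extension to $M_1$ we may assume $\iota_0^*\alpha_0 = \iota_1^*\alpha_1$.

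Step~2 (tubular neighborhoods). Flow any vector fields transverse to $\iota_j(\Sigma)$ to produce tubular-neighborhood identifications $\Sigma\times(-\varepsilon,\varepsilon)\hookrightarrow M_j$, and compose them into a diffeomorphism $\Phi\colon U_0 \to U_1$ with $\Phi\circ \iota_0 = \iota_1$. On this common chart the forms $\alpha_0$ and $\Phi^*\alpha_1$ already agree on $T\Sigma$ at $s=0$, so their difference there is a multiple of $ds$; rescaling the transverse coordinate by a suitable positive function $h$ on $\Sigma$ absorbs that discrepancy, and after this adjustment $\alpha_0$ and $\Phi^*\alpha_1$ coincide as $1$-forms at every point of $\Sigma\times\{0\}$.

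Step~3 (Moser). Set $\alpha_t := (1-t)\alpha_0 + t\,\Phi^*\alpha_1$. Since the endpoints coincide pointwise along $\Sigma\times\{0\}$ and the contact condition is open, $\alpha_t$ remains a contact form on a (possibly smaller) tubular neighborhood for every $t\in[0,1]$. The Moser trick then produces a time-dependent vector field $X_t\in \ker\alpha_t$ uniquely determined by
\begin{equation*}
  \iota_{X_t}\,d\alpha_t|_{\xi_t} \;=\; -(\Phi^*\alpha_1 - \alpha_0)|_{\xi_t}\;,
\end{equation*}
with the remaining Reeb component absorbed into a conformal factor. Because the right-hand side vanishes identically on $\Sigma\times\{0\}$, so does $X_t$; its time-$1$ flow therefore exists on a neighborhood of $\Sigma\times\{0\}$ and fixes that hypersurface pointwise, and $\Phi$ composed with the inverse of this flow is the desired contactomorphism. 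The only genuinely nontrivial step is the smooth extension of the conformal factor in Step~1 across the characteristic set; everywhere else the argument is routine once the forms have been matched along $\Sigma$.
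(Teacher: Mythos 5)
You follow the correct Moser/Gray skeleton, and your Step~1 reproduces the paper's first lemma (Giroux's smooth, positive extension of the conformal factor across the characteristic set). The gap is in Step~3, and it is precisely the issue the paper's second lemma is designed to handle. You claim that because $\alpha_0$ and $\Phi^*\alpha_1$ coincide as $1$-forms at every point of $\Sigma$, and the contact condition is open, the interpolation $\alpha_t = (1-t)\,\alpha_0 + t\,\Phi^*\alpha_1$ stays contact near $\Sigma$. This is false: pointwise agreement of the $1$-forms forces the pullbacks $\iota^*\alpha_0 = \iota^*\Phi^*\alpha_1$ to agree, hence the tangential part of $d\alpha_t$ along $\Sigma$ is $t$-independent, but the normal derivatives of the two forms need not agree and they do contribute to $d\alpha_t$ at $p\in\Sigma$. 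On $\RR^5$ with coordinates $(x,y,z,s,w)$ and $\Sigma=\{s=0\}$ take
\begin{equation*}
  \alpha_0 = dz - y\,dx - w\,ds \quad\text{and}\quad
  \alpha_1 = dz - y\,dx - w\,ds - 3s\,dw\;.
\end{equation*}
These are contact, agree as $1$-forms at every point of $\Sigma$, and induce the same singular distribution on $\Sigma$; yet $d\alpha_t = dx\wedge dy + (1-3t)\,ds\wedge dw$, so $\alpha_{1/3}$ fails the contact condition at every point of $\Sigma$ and the Moser vector field is not defined at $t=1/3$.

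What is actually required is a diffeomorphism (or, along $\Sigma$, a bundle isomorphism) for which, in addition to $\alpha_0 = \Phi^*\alpha_1$ pointwise on $\Sigma$, the interpolation $(1-\tau)\,d\alpha_0 + \tau\,d\bigl(\Phi^*\alpha_1\bigr)$ stays nondegenerate on $\restricted{\xi_0}{\Sigma}$ for every $\tau\in[0,1]$. This is property~(iii) of the paper's lemma on bundle isomorphisms along a hypersurface, and securing it requires a careful choice of the transverse directions: the paper covers $\Sigma$ by the open set where the defining $1$-form $\beta$ is nonzero (choosing as transverse directions $J_j X$ with $X$ the characteristic vector field and $J_j$ compatible complex structures on $\xi_j$) and the set where $d\beta$ is nondegenerate on $T\Sigma$ (choosing the Reeb vector fields), glues via a partition of unity, and checks the convexity by an explicit computation. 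Your ``rescale the transverse coordinate'' step only adjusts the $ds$-component of the $1$-form --- and is already problematic at points where $\alpha_j(\partial_s)=0$ --- but it says nothing about the $2$-forms, which is where the genuine difficulty lies; without that input the Moser argument does not get off the ground.
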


\begin{remark}
  To be able to apply Proposition~\ref{contact germ determined by
    hypersurface} to a hypersurface $\Sigma$ with non-empty boundary,
  one needs to attach a small collar along $\p \Sigma$, and extend the
  embeddings $\iota_0$ and $\iota_1$ in such a way that the singular
  distributions $\dD_0 = (D\iota_0)^{-1}(\xi_0)$ and
  $\dD_1 = (D\iota_1)^{-1}(\xi_1)$ agree.
\end{remark}

We split the proof of Proposition~\ref{contact germ determined by
  hypersurface} into several lemmas.  The first one is due to Giroux,
but we learned about it from \cite{MassotThesis}.

\begin{lemma}\label{scaling of defining 1form of sing distribution}
  Let $\Sigma$ be a (not necessarily closed) manifold carrying a
  (cooriented) singular distributions~$\dD$ that is given as the
  kernel of a $1$-form~$\beta$ such that $d\beta$ does not vanish at
  the singular points of $\dD$; i.e., at the points where $\beta = 0$.

  If $\beta'$ is any other $1$-form such that $\dD = \ker \beta'$
  inducing the same coorientation, and such that $d\beta'$ does not
  vanish either at the singular points of $\dD$, then there exists a
  smooth positive function $f\colon \Sigma\to ]0,\infty[$ such that
  \begin{equation*}
    \beta = f\cdot \beta' \;.
  \end{equation*}
\end{lemma}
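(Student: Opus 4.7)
The plan is to define $f$ first on the complement of the singular locus, where the definition is forced, and then to extend $f$ smoothly and positively across each singular point using the hypothesis that $d\beta$ and $d\beta'$ do not vanish there. Set $Z := \{\beta = 0\}$; since $\ker \beta = \ker \beta'$ as singular distributions one automatically has $Z = \{\beta' = 0\}$, and the hypothesis that $d\beta$ is non-zero on $Z$ forces $Z$ to have empty interior. On the dense open set $\Sigma \setminus Z$ both $\beta$ and $\beta'$ are nowhere-vanishing $1$-forms defining the same cooriented hyperplane field, so there is a unique smooth positive function $f\colon \Sigma \setminus Z \to (0,\infty)$ with $\beta = f\cdot \beta'$. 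Wedging gives $\beta \wedge \beta' \equiv 0$ on $\Sigma \setminus Z$, and by continuity the identity extends to all of $\Sigma$; equivalently, $\beta(V)\,\beta'(W) = \beta(W)\,\beta'(V)$ for any two local vector fields $V, W$, everywhere on $\Sigma$.

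Next I fix a singular point $p \in Z$ and build a local smooth extension of $f$. For any vector fields $V, W$ defined near $p$, the Cartan formula combined with $\beta'_p = 0$ yields
\[
  d\beta'_p(W, V) = W(\beta'(V))|_p - V(\beta'(W))|_p.
\]
Because $d\beta'_p \neq 0$, I can choose $V, W$ for which the right-hand side is non-zero, and after possibly swapping their roles arrange that $h := \beta'(V)$ satisfies $h(p) = 0$ and $dh_p \neq 0$. In a small neighborhood $U$ of $p$ the level set $\{h = 0\}$ is then a smooth hypersurface. Because $\ker \beta_x = \ker \beta'_x$ at every point $x$, the two smooth functions $\beta(V)$ and $\beta'(V) = h$ have exactly the same zero set, so $\beta(V)$ vanishes identically along the hypersurface $\{h = 0\}$. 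The standard division lemma (a consequence of Hadamard's lemma applied in coordinates where $h = x_1$) then provides a smooth function $\tilde f$ on $U$ with $\beta(V) = \tilde f \cdot h$. On $U \setminus Z$, the global identity $\beta(V)\,\beta'(W') = \beta(W')\,\beta'(V)$ shows that the quotient $\beta(V)/\beta'(V)$ is independent of the choice of $V$, so it coincides with the already-defined $f$, and $\tilde f$ is therefore a smooth extension of $f$ across $p$.

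To upgrade $f(p) \geq 0$ to $f(p) > 0$, I run the symmetric argument with the roles of $\beta$ and $\beta'$ exchanged, using $d\beta_p \neq 0$, to produce a smooth function $\tilde g$ near $p$ with $\beta' = \tilde g \cdot \beta$. The relation $f \cdot \tilde g \equiv 1$ on $\Sigma \setminus Z$ extends by continuity, giving $f(p)\,\tilde g(p) = 1$ and hence $f(p) > 0$. Since $\Sigma \setminus Z$ is dense, the local extensions found above are unique and patch together into a single smooth positive function $f$ on $\Sigma$ with $\beta = f \cdot \beta'$. The crux of the argument, and the only place where the hypothesis on $d\beta$ and $d\beta'$ really enters, is the Hadamard division step; without the transverse-vanishing condition, $\beta(V)$ and $\beta'(V)$ could vanish to mismatched orders along $Z$ and the smooth extension would fail.
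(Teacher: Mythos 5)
Your proof is correct and follows essentially the same route as the paper's: define $f$ on the dense regular locus as the forced ratio $\beta/\beta'$, then at a singular point use the hypothesis $d\beta' \neq 0$ to exhibit a direction in which $\beta'$ vanishes transversely, invoke the Hadamard division lemma to extend $f$ smoothly across the singular set, and use density to see that the local extensions agree with $f$ and patch. The only cosmetic differences are that the paper works in a fixed coordinate chart (writing $\beta = \sum g_j\,dx_j$ and straightening $g'_1$ to a coordinate via the implicit function theorem) whereas you phrase the same step invariantly with vector fields and the Cartan formula, and that the paper establishes $f > 0$ directly (if $f(p) = 0$ then $\beta = f\beta'$ would force $d\beta_p = 0$) rather than by running the symmetric argument and using $f\tilde g \equiv 1$; both positivity arguments use the hypothesis on $d\beta$ in the same essential way. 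One small point worth making explicit when you say $\tilde f$ coincides with $f$ on $U \setminus Z$: the identity $\beta(V) = \tilde f\,\beta'(V)$ only pins down $\tilde f$ on $\{h \neq 0\}$, so you should first observe that $\{h \neq 0\}$ is dense in $U$ and then invoke continuity to conclude agreement on all of $U \setminus Z$.
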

\begin{proof} 
  Denote the set of all regular points of the distributions~$\dD$ by
  $\Ureg = \{p \in \Sigma|\; \dD_p \ne T_p\Sigma\}$.  On $\Ureg$, we
  can simply define $f$ to be the quotient $\beta(X)/\beta'(X)$, where
  $X$ is any vector field on $\Ureg$ that is transverse to $\dD$.  We
  are thus left with studying the singular points $p \notin \Ureg$ of
  $\dD$, where $\beta$ and $\beta'$ both vanish, and proving that $f$
  extends to a non-vanishing smooth function.

  Use a coordinate chart for $\Sigma$ centered at
  $p\in \Sigma \setminus \Ureg$ with coordinates
  $\x = (x_1, \dotsc,x_n)$.  We can then write
  \begin{equation*}
    \beta = g_1\, dx_1 + \dotsm + g_n\, dx_n
    \quad\text{ and }\quad
    \beta' = g_1'\, dx_1 + \dotsm + g_n'\, dx_n
  \end{equation*}
  with functions $g_1,\dotsc,g_n$ and $g_1',\dotsc,g_n'$ such that all
  $g_j$ and all $g_j'$ vanish at the origin.  In fact for each $j$,
  the two functions~$g_j$ and $g_j'$ vanish precisely on the same
  subset.  By our assumption $d\beta' \ne 0$ at $p$, so that we can
  assume after possibly permuting the coordinates that
  $\frac{\partial g_1'}{\partial x_2} (\0) \ne 0$.

  We will now show that $f$ extends in the chart smoothly to a
  neighborhood of the origin such that
  $g_1(\x) = f(\x) \cdot g_1'(\x)$.  Note that
  $\{\x|\; g_1'(\x) \ne 0\}$ is a subset of $\Ureg$ so that $f$ is a
  well-defined function on this subset.  The condition
  $\frac{\partial g_1'}{\partial x_2} (\0) \ne 0$ allows us to apply
  the implicit function theorem to find a new set of coordinates
  $\y = (y_1,\dotsc,y_n)$ for which $g_1'$ simplifies to
  $g_1'(\y) = y_2$.  In this new chart, we obtain that $f$ is defined
  in particular for all points $\{y_2 \ne 0\}\subset \Ureg$.

  Consider now the function~$g_1$ represented with respect to the
  $\y$-coordinates.  It also vanishes precisely along the hyperplane
  $\{y_2=0\}$ so that there exists a smooth functions~$\tilde g_1$
  allowing us to write $g_1$ as
  \begin{equation*}
    g_1(\y) = y_2\, \tilde g_1(\y) \;,
  \end{equation*}
  see for example \cite[Lemma~2.1]{Milnor_MorseTheory}.  Using this
  representation we see that $f(\y) = \tilde g_1(\y)$ extends to a
  smooth function on the whole chart so that it obviously satisfies
  the equation $g_1 = f \cdot g_1'$.
  
  In particular, since $\Ureg$ is dense in $\Sigma$ the continuous
  extension of $f$ is unique and does not depend on our choice of
  charts.  This way, $f$ can be defined smoothly on all of $\Sigma$,
  and it satisfies $\beta = f\cdot \beta'$ on $\Sigma$.

  It remains to prove that $f$ does not vanish anywhere, but this is
  clear because if $f$ ever vanished at a singular point~$p$ of $\dD$,
  we would find from $\beta = f\cdot \beta'$ that $d\beta = 0$ at $p$
  --- contrary to our assumption that $d\beta \ne 0$ along the
  singular set of $\dD$.
\end{proof}

\begin{lemma}\label{linear isomorphism around hypersurface}
  Let $(M_0,\xi_0)$ and $(M_1,\xi_1)$ be two $(2n+1)$-dimensional
  contact manifolds with contact forms~$\alpha_0$ and $\alpha_1$
  respectively, and let $\Sigma$ be  a (not necessarily closed)
  manifold of dimension~$2n$.

  Suppose that there are two embeddings
  \begin{equation*}
    \iota_0\colon \Sigma \hookrightarrow (M_0,\xi_0) \text{ and }
    \iota_1\colon \Sigma \hookrightarrow (M_1,\xi_1)
  \end{equation*}
  of $\Sigma$ into $M_0$ and $M_1$ such that
  $\iota_0^* \alpha_0 = \iota_1^* \alpha_1$.

  \smallskip

  Then, there is a bundle isomorphism
  \begin{equation*}
    \Phi\colon \restricted{TM_0}{\Sigma} \to \restricted{TM_1}{\Sigma}
  \end{equation*}
  such that:
  \begin{itemize}
  \item [(i)] $\restricted{\Phi}{T\Sigma} = \id_{T\Sigma}$ (We
    identify here, and in the rest of the proof, $T\Sigma$ with the
    tangent spaces of $\iota_0(\Sigma)$ and $\iota_1(\Sigma)$.)
  \item [(ii)] $\alpha_1\circ \Phi = \alpha_0$ on
    $\restricted{TM_0}{\Sigma}$.
  \item [(iii)] The linear interpolation
    $(1-\tau)\, d\alpha_0 + \tau\, (d\alpha_1\circ \Phi)$ is for every
    $\tau \in [0,1]$ a symplectic form on
    $\restricted{\xi_0}{\Sigma}$.
  \end{itemize}
\end{lemma}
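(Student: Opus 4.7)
My plan is to define $\Phi$ via a choice of transverse vector fields with an adjustable correction term and then verify (i)--(iii). Pick smooth vector fields $R_0$ along $\iota_0(\Sigma)$ and $R_1$ along $\iota_1(\Sigma)$, each transverse to the respective hypersurface and normalized so that $\alpha_i(R_i) = 1$. Such sections exist because the normal line bundle of a codimension-one submanifold is trivial and a generic transverse direction is not contained in $\xi_i$, after which one rescales. I declare $\restricted{\Phi}{T\Sigma} = \id$ and $\Phi(R_0) = R_1 + \sigma$, where $\sigma$ is a smooth section of $\restricted{\xi_1}{\Sigma}$ to be chosen in the last step, extending linearly. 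For every $\sigma$, condition (i) is built in and (ii) holds, because the hypothesis $\iota_0^*\alpha_0 = \iota_1^*\alpha_1$ forces $\alpha_1\circ\Phi = \alpha_0$ on $T\Sigma$ while on the transverse line both sides evaluate to $1$ (using $\alpha_1(\sigma)=0$). A direct consequence is $\Phi(\restricted{\xi_0}{\Sigma}) = \restricted{\xi_1}{\Sigma}$.

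\textbf{Setup for (iii).} Differentiating the hypothesis yields $\restricted{d\alpha_0}{T\Sigma} = \restricted{d\alpha_1}{T\Sigma}$, and since $\Phi$ is the identity on $T\Sigma$ the two 2-forms $d\alpha_0$ and $d\alpha_1\circ\Phi$ agree on $T\Sigma\wedge T\Sigma$ for every $\sigma$. They therefore coincide on the sub-bundle $\zeta := \restricted{\xi_0}{\Sigma}\cap T\Sigma$, which is either all of $\restricted{\xi_0}{p}$ at singular points of $\dD$ (where (iii) becomes trivial) or a hyperplane inside $\restricted{\xi_0}{p}$ at regular points. Property (iii) then reduces fiberwise to the following linear-algebra fact: two symplectic forms on a $2n$-dimensional vector space that agree on a codimension-one subspace and induce the same orientation have a non-degenerate convex combination. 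The one-line proof uses the common one-dimensional null direction $w$ of their shared restriction: each $\omega_i^n$ equals $n!\,\omega_i(w,e)$ times a common reference top-form (for any $e$ outside the subspace), so the sign-matching of $\omega_0(w,e)$ and $\omega_1(w,e)$ is both the orientation condition and exactly what prevents $\omega_\tau(w,e)$ from vanishing.

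\textbf{Choosing $\sigma$.} It remains to choose $\sigma$ so that the orientation condition holds pointwise along $\Sigma$. At a regular point $p$, writing $e\in \restricted{\xi_0}{p}\setminus\zeta_p$ as $v - \iota_0^*\alpha_0(v)\, R_0$ for some $v\in T_p\Sigma$, the value $d\alpha_1(w, \Phi e)$ depends affinely on $d\alpha_1(w,\sigma)$, and this quantity ranges over all of $\RR$ as $\sigma$ varies in a line complementary to $\dD$ inside $\xi_1$ (the functional $d\alpha_1(w,\cdot)$ is non-zero there by non-degeneracy of $d\alpha_1$ on $\xi_1$); the set of valid $\sigma_p$ is therefore a non-empty open half-space in $\restricted{\xi_1}{p}$. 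At singular points of $\dD$ any $\sigma_p$ works. I expect the main technical obstacle to be the smoothness of $\sigma$ across the singular set of $\dD$, where the complementary line in $\xi_1$ degenerates. The cleanest way around it is to start with $\sigma \equiv 0$ in a neighborhood of the singular set (where the condition is automatic) and use a partition-of-unity argument combined with the convexity of the half-space condition to patch local choices at regular points into a smooth global $\sigma$.
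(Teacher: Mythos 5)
Your construction fails at the first step: a vector field $R_i$ along $\iota_i(\Sigma)$ that is simultaneously transverse to the hypersurface and satisfies $\alpha_i(R_i)=1$ everywhere need not exist. Such an $R_i$ must avoid both $T_p\Sigma$ and $\xi_i(p)$ at every $p$, but in the examples of interest the sign of $\alpha_i$ on a fixed coorienting transverse direction flips between different components of the singular set, so any transverse field must dip into $\xi_i$ somewhere in between. Concretely, on the disk $\{z=0,\; r<\pi+\delta\}$ in $(\RR^3,\aOT)$ the contact plane is tangent to the disk along $\{r=0\}$ and along $\{r=\pi\}$, and $\aOT(\p_z)=\cos r$ changes sign between those circles; hence $\aOT(R)$ changes sign along the disk for any transverse $R$ and must vanish on some circle, where $R\in\xi$. (The unit sphere in $(\RR^3,\ker(dz-y\,dx))$ exhibits the same behavior.) The appeal to ``a generic transverse direction avoids $\xi_i$'' is a pointwise observation; it does not assemble into a global section, because the admissible vectors in each fiber form the complement of one or two hyperplanes, which is neither convex nor connected. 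This is precisely the difficulty the paper's two-chart decomposition is built to circumvent: it defines $\Phi_{\mathrm{symp}}$ using the Reeb fields as complements over $\Usymp$ (which contains the singular set), defines $\Phi_{\mathrm{reg}}$ using complements $Y_j=J_jX\in\xi_j$ over $\Ureg$, and glues them by a partition of unity, which is permissible because (i)--(iii) are convex conditions.

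The linear-algebra reduction of (iii) to a sign condition on the characteristic direction $w$ is correct, and is essentially the computation the paper performs over $\Ureg$. But the proposed patching of $\sigma$ has its own gap: you assert $\sigma\equiv 0$ is admissible on a \emph{neighborhood} of the singular set, whereas this is immediate only \emph{at} the singular points, where $\dD_p=\restricted{\xi_0}{p}$ and the interpolation is constant. For a generic transverse $R_j$, the cross-terms $d\alpha_j(w,R_j)$ do not cancel at nearby regular points, and as $p$ approaches the singular set both $w$ and the complementary line in $\xi_1$ degenerate, so the required $\sigma$ need not stay bounded. The paper's choice of Reeb fields near the singular set is what makes $d\alpha_j(R_j,\cdot)=0$ and hence kills exactly those cross-terms, so that (iii) holds there with no correction at all.
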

\begin{proof}
  Denote the $1$-form $\iota_0^* \alpha_0 = \iota_1^* \alpha_1$ by
  $\beta$.  To construct the desired bundle isomorphism~$\Phi$, we
  distinguish two types of subsets of $\Sigma$: Define
  \begin{equation*}
    \Ureg = \bigl\{p\in \Sigma\bigm|\; \beta_p \ne 0 \bigr\}
    \quad \text{ and } \quad
    \Usymp = \bigl\{p\in \Sigma\bigm|\; d\beta_p^n \ne 0 \bigr\} \;.
  \end{equation*}
  Both sets are open and their union covers all of $\Sigma$,
  because $d\beta = \iota_j^* d\alpha_j$ is at every point
  $p\in \Sigma$ where $T_p\Sigma = \xi_j(p)$ a maximally
  non-degenerate form on $T_p\Sigma$; that is, $d\beta$ is a
  symplectic form on $T_p\Sigma$ at every point~$p$ where $\beta$
  vanishes.

  We construct now separate bundle isomorphisms over
  $\Ureg$ and over $\Usymp$ that we then glue
  together using a partition of unity.

  \medskip
  
  Over $\Usymp$, we can decompose $TM_j$ as
  $T\Sigma \oplus \Span(R_j)$, where $R_j$ is the Reeb vector field of
  $\alpha_j$.  This allows us to define a first bundle isomorphism
  \begin{equation*}
    \Phi_{\mathrm{symp}} \colon \restricted{TM_0}{\Usymp} \to
    \restricted{TM_1}{\Usymp}
  \end{equation*}
  by $\Phi_{\mathrm{symp}}(v + c R_0) = v + c R_1$ for every
  $v\in \restricted{T\Sigma}{\Usymp}$ and every $c\in \RR$.  It is
  easy to verify that $\alpha_0$ and
  $\alpha_1\circ \Phi_{\mathrm{symp}}$ agree on
  $\restricted{TM_0}{\Usymp}$.  Furthermore, $d\alpha_0$ and
  $d\alpha_1\circ \Phi_{\mathrm{symp}}$ also agree, because for any
  pair of vectors $v,v' \in T\Sigma$, we have
  $d\alpha_j(v,v') = d\beta(v,v')$ on one hand, and
  $d\alpha_j(R_j,v) = 0$ on the other, for both $j=0, 1$.

  \medskip

  To construct a bundle isomorphism over $\Ureg$, we define the
  \emph{characteristic foliation~$\fF$} on $\Sigma$.  It is
  characterized over $\Ureg$ as the (singular) subdistribution of
  $\ker \beta = T\Sigma \cap \xi$ on which
  $\restricted{d\beta}{\ker \beta}$ vanishes.  A dimension count shows
  that $\fF$ is of dimension~$1$.

  Choose a volume form~$d\vol_\Sigma$ on $\Sigma$, and let $X$ be the
  vector field determined by the equation
  \begin{equation*}
    \iota_X d\vol_\Sigma = \beta\wedge (d\beta)^{n-1} \;.
  \end{equation*}
  Since $X$ only vanishes at points where $\beta$ vanishes, it follows
  that $X$ is everywhere on $\Ureg$ non-singular, and it is easy to
  convince oneself that the characteristic foliation is generated by
  $X$.
  
  \smallskip
  
  Choosing compatible complex structures~$J_0$ on $(\xi_0,d\alpha_0)$
  and $J_1$ on $(\xi_1,d\alpha_1)$, we define two vector
  fields~$Y_0 = J_0 \cdot X$ and $Y_1 = J_1 \cdot X$ along
  $\iota_0(\Sigma)$ and $\iota_1(\Sigma)$ respectively.  These vector
  fields are everywhere over $\Ureg$ transverse to $\Sigma$ and they
  lie in the kernel of $\alpha_j$.  This way, we can split the tangent
  bundles as
  \begin{equation*}
    \restricted{TM_j}{\Ureg} =
    \restricted{T\Sigma}{\Ureg} \oplus
    \restricted{\Span(Y_j)}{\Ureg}  \;,
  \end{equation*}
  and use these decompositions to define the bundle isomorphism
  \begin{equation*}
    \Phi_{\mathrm{reg}} \colon \restricted{TM_0}{\Ureg} \to
    \restricted{TM_1}{\Ureg}
  \end{equation*}
  by $\Phi_{\mathrm{reg}}\bigl(v + c Y_0\bigr) = v + c Y_1$ for every
  $v\in \restricted{T \Sigma}{\Ureg}$ and every $c\in \RR$.  Again, we
  easily check that $\alpha_1\circ \Phi_{\mathrm{reg}}$ agrees on
  $\restricted{TM_0}{\Ureg}$ with $\alpha_0$ so that
  $\Phi_{\mathrm{reg}}\bigl(\restricted{\xi_0}{\Ureg}\bigr) =
  \restricted{\xi_1}{\Ureg}$.

  To understand the interpolation between $d\alpha_0$ and
  $d\alpha_1\circ \Phi_{\mathrm{reg}}$, choose at a point $p\in \Ureg$
  a basis of $\xi_0(p)$ of the form
  $v_1,\dotsc,v_{2n-2}, X(p), Y_0(p)$, where the $v_j$ all lie in
  $\ker \beta$ and are complementary to $X(p)$.  Assume they are
  ordered in such a way that
  $d\alpha_0^{n-1} (v_1,\dotsc,v_{2n-2}) =
  d\beta^{n-1}(v_1,\dotsc,v_{2n-2}) =
  d\alpha_1^{n-1}(v_1,\dotsc,v_{2n-2}) > 0$.  Note that
  $d\alpha_0(X,\cdot)$ and $d\alpha_1(X,\cdot)$ vanish on all the
  vectors $v_1,\dotsc,v_{2n-2}, X$.
  
  Define
  $d\alpha_\tau = (1-\tau)\, d\alpha_0 + \tau \, (d\alpha_1\circ
  \Phi_{\mathrm{reg}})$ for any $\tau\in[0,1]$.  Then we
  compute for all $\tau \in [0,1]$ that
  \begin{equation*}
    d\alpha_\tau^n (v_1,\dotsc,v_{2n-2}, X, Y_0) = n\, d\alpha_\tau
    (X, Y_0) \cdot d\alpha_\tau^{n-1} (v_1,\dotsc,v_{2n-2})
    > 0 \;,
  \end{equation*}
  because
  $d\alpha_\tau (X, Y_0) = (1-\tau)\, d\alpha_0(X, J_0X) + \tau \,
  d\alpha_1(X, J_1X) > 0$.

  \medskip

  We glue now $\Phi_{\mathrm{reg}}$ and $\Phi_{\mathrm{symp}}$ to
  produce a global bundle isomorphism.  Choose a smooth function
  $\rho\colon \Sigma \to [0,1]$ with support in $\Ureg$ such that
  $1-\rho$ has support in $\Usymp$ so that $\rho$ and $1-\rho$ form a
  partition of unity subordinated to $\bigl\{\Ureg, \Usymp\bigr\}$.
  Define a bundle homomorphism
  \begin{equation*}
    \Phi\colon \restricted{TM_0}{\Sigma} \to \restricted{TM_1}{\Sigma}
  \end{equation*}
  by mapping a vector $v\in T_pM_0$ at a point $p\in \Sigma$ to
  $\Phi(v) = \rho(p)\cdot \Phi_{\mathrm{reg}}(v) + (1-\rho(p))\cdot
  \Phi_{\mathrm{symp}}(v)$.  It is obvious that $\Phi$ is a bundle
  homomorphism such that $\restricted{\Phi}{T\Sigma} = \id_{T\Sigma}$
  and such that $\alpha_0 = \alpha_1\circ \Phi$ on
  $\restricted{TM_0}{\Sigma}$ proving properties~(i) and (ii) in the
  lemma.

  It remains to verify property~(iii).  Define the interpolation
  $d\alpha_\tau := (1-\tau)\,d\alpha_0 + \tau\, (d\alpha_1\circ \Phi)$
  for $\tau\in [0,1]$.  Since $\Phi$ agrees with
  $\Phi_{\mathrm{symp}}$ at the points where $\beta = 0$, we obtain
  that $d\alpha_\tau = d\alpha_0$ is non-degenerate at any such point.
  We study now the desired property at points at which $\beta \ne 0$
  and thus $X\ne 0$.
  
  Since $\restricted{d\alpha_\tau}{T\Sigma} = d\beta$ is independent
  of $\tau$, we see that $d\alpha_\tau(X,\cdot)$ vanishes on every
  vector that lies in $\ker\beta$.  Using the same basis chosen above
  with $Y_0 = J_0 X$ and $Y_1 = J_1 X$, it follows that the sign of
  $d\alpha_\tau^n(v_1,\dotsc,v_{2n-2}, X, Y_0) = n\, d\alpha_\tau(X,
  Y_0) \cdot d\beta^{n-1}(v_1,\dotsc,v_{2n-2})$ only depends on the
  sign of the term $d\alpha_\tau(X, Y_0)$.

  For this term we obtain
  $d\alpha_\tau(X,Y_0) = (1-\tau)\,d\alpha_0(X,J_0 X) +
  \tau\,d\alpha_1\bigl(X, \Phi (Y_0)\bigr)$.  The first term is
  clearly positive, and for the second one write
  $d\alpha_1\bigl(X, \Phi (Y_0)\bigr) = \rho \,d\alpha_1(X, J_1X) +
  (1-\rho)\, d\alpha_1\bigl(X, \Phi_{\mathrm{symp}}(Y_0)\bigr)$, where
  the first term is again positive.  Recall that
  $d\alpha_0 = d\alpha_1\circ \Phi_{\mathrm{symp}}$ so that we can
  simplify the second term as
  $d\alpha_1\bigl(X, \Phi_{\mathrm{symp}}(Y_0)\bigr) =
  d\alpha_1\bigl(\Phi_{\mathrm{symp}}(X),
  \Phi_{\mathrm{symp}}(Y_0)\bigr) = \bigl(d\alpha_1\circ
  \Phi_{\mathrm{symp}}\bigr) (X, Y_0) = d\alpha_0(X, Y_0)$.  Thus
  $d\alpha_\tau(X,Y_0) = (1-\tau)\,d\alpha_0(X,Y_0) + \tau\,
  \bigl(\rho \,d\alpha_1(X, Y_1) + (1-\rho)\, d\alpha_0(X, Y_0)\bigr)$
  is positive as a convex combination of positive terms, and we have
  shown property~(iii).
\end{proof}

\begin{proof}[Proof of Proposition~\ref{contact germ determined by
    hypersurface}]
  Let $\alpha_0$ be a contact form for $\xi_0$, and let $\alpha_1$ be
  a contact form for $\xi_1$.  By Lemma~\ref{scaling of defining 1form
    of sing distribution} there is a smooth function
  $f\colon \Sigma \to \RR_{>0}$ such that
  $\iota_0^* \alpha_0 = f\cdot \iota_1^* \alpha_1$.  We would like to
  extend $f\circ \iota_1^{-1}$ to all of $M_1$ to normalize $\alpha_1$
  globally; in general though, if $\Sigma$ is not closed, this might
  be impossible.
  
  Denote the normal bundle of $\iota_1(\Sigma)$ in $M_1$ by
  $\nu_1 \Sigma \stackrel{\pi}{\to} \Sigma$, and recall that there is
  a tubular neighborhood~$U_1$ of $\iota_1(\Sigma)$ that is
  diffeomorphic to a neighborhood $V_1$ of the $0$-section in
  $\nu_1\Sigma$ (of course $V_1$ will generally not have uniform
  radius in the fiber directions, when $\Sigma$ is not closed).  The
  function~$f\circ \pi$ is a smooth positive function on
  $\nu_1\Sigma$.  We will replace $M_1$ by the open subset $U_1$, and
  use $f\circ \pi$ to rescale $\alpha_1$ on $U_1$ so that we can
  assume that $\iota_0^* \alpha_0 = \iota_1^* \alpha_1$.  This allows
  us to apply Lemma~\ref{linear isomorphism around hypersurface} to
  obtain a bundle isomorphism~$\Phi$ between
  $\restricted{TM_0}{\iota_0(\Sigma)}$ and
  $\restricted{TM_1}{\iota_1(\Sigma)}$.

  \medskip

  Let $U_0$ be a tubular neighborhood of $\iota_0(\Sigma)$ in $M_0$
  such that the exponential map~$\exp_0$ (with respect to some
  Riemannian metric) defines a diffeomorphism
  $\exp_0\colon V_0 \to U_0$, where $V_0$ is a neighborhood of the
  $0$-section of the normal bundle of $\Sigma$ in $M_0$.  Similarly,
  let $\exp_1$ be the exponential map on $M_1$.  By suitably reducing
  the size of $U_0$ and $U_1$, we can assume that
  \begin{equation*}
    \Psi := \exp_1 \circ\, \Phi \circ\, \exp_0^{-1}\colon U_0 \to U_1
  \end{equation*}
  is a diffeomorphism.  To simplify our setup, pull-back $\alpha_1$ to
  $U_0$, and work in the fixed ambient manifold~$U_0$.  For simplicity
  we also write $\alpha_1$ for its pull-back.  Then it follows that
  $U_0$ contains the submanifold~$\Sigma$, and carries two contact
  structures given by contact forms~$\alpha_0$ and $\alpha_1$ such
  that $\alpha_0$ and $\alpha_1$ agree at all points of $\Sigma$, and
  such that the linear interpolation of $d\alpha_0$ and $d\alpha_1$ is
  a path of symplectic structures on
  $\restricted{\xi_0}{\Sigma} = \restricted{\xi_1}{\Sigma}$.

  The rest of the proof is a simple application of the Moser trick:
  Clearly the interpolation
  $\alpha_\tau := (1-\tau)\,\alpha_0 + \tau\, \alpha_1$ satisfies
  along $\Sigma$ for every $\tau \in [0,1]$ the contact condition.
  There is thus a small neighborhood of $\Sigma$ in $U_0$ on which all
  $\alpha_\tau$ are contact forms.
  
  As in the standard proof of Gray stability, we define a vector field
  $X_\tau$ on this neighborhood by the equations
  \begin{equation*}
    \alpha_\tau(X_\tau) = 0 \quad \text{ and } \quad
    d\alpha_\tau(X_\tau, \cdot) = f_\tau \, \alpha_\tau - \dot \alpha_\tau
  \end{equation*}
  with $f_\tau := \dot \alpha_\tau (R_\tau)$, where $R_\tau$ is the
  Reeb field of $\alpha_\tau$.  Note that the right hand side of the
  second equation vanishes along $\Sigma$, thus it follows that
  $X_\tau(p) = 0$ at every $p\in \Sigma$.  By reducing to a smaller
  neighborhood of $\Sigma$ in $U_0$, the flow of $X_\tau$ will be
  defined up to time~$1$ giving a contact isotopy between $\xi_0$ and
  $\xi_1$.

  Composing this isotopy with $\Psi$, we find the desired
  contactomorphism.
\end{proof}

\providecommand{\bysame}{\leavevmode\hbox to3em{\hrulefill}\thinspace}
\providecommand{\MR}{\relax\ifhmode\unskip\space\fi MR }
\providecommand{\MRhref}[2]{%
  \href{http://www.ams.org/mathscinet-getitem?mr=#1}{#2}
}
\providecommand{\href}[2]{#2}

\bibliographystyle{amsalpha}


\end{document}